\newcommand{\notion}[1]{\emph{#1}\index{#1}}
\newcommand{\inT}{\colon\hspace{-1.16mm}}
\newcommand{\ZZ}{\mathbb Z}
\newcommand{\RR}{\mathbb R}
\newcommand{\CC}{\mathbb C}
\newcommand{\id}{\mathrm{id}}
\newcommand{\Alg}{\ensuremath{\text{-Alg}}}
\newcommand{\op}{\ensuremath{\mathrm{op}}}
\newcommand{\bu}{\ensuremath{\mathsmaller{\mathsmaller{\,\bullet\,}}}}
\newcommand{\defequal}{\ensuremath{\colon\!\!\!\equiv}}
\newcommand{\image}{\ensuremath{\text{image}}}
\newcommand{\epi}{\ensuremath{\twoheadrightarrow}}
\newcommand{\mono}{\ensuremath{\hookrightarrow}}
\newcommand{\Aut}{\ensuremath{\mathrm{Aut}}}
\newcommand{\BAut}{\ensuremath{\mathrm{BAut}}}
\newcommand{\BG}{\ensuremath{\mathrm{BG}}}
\newcommand{\GL}{\ensuremath{\mathrm{GL}}}
\newcommand{\DD}{\ensuremath{\mathbb{D}}}
\newcommand{\refl}{\ensuremath{\mathrm{refl}}}
\newcommand{\Gstr}{\ensuremath{G\text{-str}}}
\newcommand{\hquot}{\hspace{-0.127em}\sslash\hspace{-0.127em}}
\newcommand{\agda}[1]{\href{#1}{\color{darkgreen} \ensuremath{✓}}}
\newcommand{\drawpb}[1]{\arrow[#1, phantom, "\text{\scriptsize(pb)}" description]}
\theoremstyle{break}
    \newtheorem{theorem}{Theorem}[section]
    \newtheorem{corollary}[theorem]{Corollary}
    \newtheorem{remark}[theorem]{Remark}
    \newtheorem{lemma}[theorem]{Lemma}
    \newtheorem{proposition}[theorem]{Proposition}
    \newtheorem{definition}[theorem]{Definition}
    \newtheorem{axiom}[theorem]{Axiom}
    \newtheorem{example}[theorem]{Example}
    \newtheorem{examples}[theorem]{Examples}
\theoremstyle{plain}
\theoremstyle{nonumberbreak}
\theoremstyle{nonumberplain}
    \newtheorem{proof}{Proof}
    \newtheorem{construction}{Construction}
\newcommand*\colvec[1]{
        \global\colveccount#1
        \begin{pmatrix}
        \colvecnext
}
\def\colvecnext#1{
        #1
        \global\advance\colveccount-1
        \ifnum\colveccount>0
                \\
                \expandafter\colvecnext
        \else
                \end{pmatrix}
        \fi
}
\newcommand{\ignore}[1]{}
\definecolor{darkgreen}{rgb}{0,0.5,0}
\begin{document}

\begin{center}
  \LARGE{Synthetic G-Jet-Structures in \\ Modal Homotopy Type Theory \\}
  \vspace{0.7cm}
  \normalsize{Felix Cherubini \\}
  \vspace{0.2cm}
  \normalsize{University of Gothenburg and Chalmers University of Technology \\}
  \vspace{0.2cm}
  \normalsize{\today \\}
  \vspace{0.2cm}
  \begin{abstract}
    This article constructs the moduli stack of torsion-free $G$-jet-structures in homotopy type theory with one monadic modality.
    This yields a construction of this moduli stack for any $\infty$-topos equipped with any stable factorization systems.

    In the intended applications of this theory, the factorization systems are given by the deRham-Stack construction.
    Homotopy type theory allows a formulation of this abstract theory with surprising low complexity.
    This is witnessed by the accompanying formalization of large parts of this work.
  \end{abstract}
\end{center}

\tableofcontents

\section{Introduction}

The constructions and theorems in this article are formulated in homotopy type theory.
In \cite{shulman-strict-univalence}, Michael Shulman has shown,
that homotopy type theory can be interpreted in any Grothendieck $(\infty,1)$-topos as defined in \cite{Lurie}[Definition 6.1.0.4].
Throughout the article, we assume a fixed monadic modality.
By \notion{(monadic) modality} we mean the same as ``modality'' defined in \cite{UFP}[Definition 7.7.5] or the ``higher modalities'' \cite{egbert-bas-mike}[Definition 1.1] or the equivalent notion of ``uniquely eliminating modalities'' \cite{egbert-bas-mike}[Definition 1.2].

A modality may be described as an operation $\Im$ together with a map $\iota_X:X\to \Im X$ for any type $X$,
such that a dependent version of the following commonly known property of a reflector holds:

For all $Y$ such that $\iota_Y:Y\to \Im Y$ is an equivalence and all
maps $f:X\to Y$, there is a unique $\psi:\Im X\to Y$, such that the diagram commutes
\begin{center}
  \begin{tikzcd}
    X\arrow[r,"\iota_X"]\arrow[dr, "f", swap] & \Im X\arrow[dashed, d,"\exists!\psi"] \\
    & Y
  \end{tikzcd}
\end{center}
The dependent version of this universal property will be axiom \ref{axiom-coreduction} -- which we assume throughout this article for convenience.
Externally, a monadic modality in homotopy type theory corresponds to a stable factorization system on an $(\infty,1)$-topos \cite{egbert-bas-mike}[Appendix A, in particular p.\ 76].

The examples of modalities $(\Im,\iota)$ we had in mind when writing this article should be thought of as providing a notion of \emph{infinitesimally close}. More specifically, two points $x,y:X$ are infinitesimally close if their images under $\iota$ coincide.
This makes only sense in a context where there are infinitesimals in the first place.

As far as the author knows, all relevant examples of such particular modalities are constructed by passing from spaces to algebras of functions on spaces and by introducing infinitesimals via nilpotent elements in those algebras.
A good intuition is that the functions are coordinate functions and in an infinitesimal space the coordinates can be so small that taking a power of them actually turns them into zero. If these infinitesimal spaces are around, macroscopic spaces $X$ can be probed by them.

The information, that can be probed in this way, may be collapsed by passing to $\Im X$. It is important to note that this collapse almost never preserves structured spaces like manifolds or schemes -- they are replaced by macroscopically similar spaces, which have trivial infinitesimal structure and therefore trivial tangent spaces.
Spaces which are only of infinitesimal extent, like the formal or $k$-th order disks of algebraic geometry, are mapped to the one point space by $\Im$.
We will sketch an easy way of constructing $\Im$ below, which works for a class of examples.
It turns out to be more natural to have a modality $\Im$ which collapses infinitesimals of all orders at once --
it is possible to construct models which capture the notion of a similar modality collapsing only first order infinitesimals, but the models the author came up with lacked other desirable properties.
The use of $G$-jet-structures in this article instead of $G$-structures stems from this decisions for, to use general infinitesimals as a primary notion.
In \Cref{G-jet-structures} we will briefly explain why we do expect that we can also cover the case of $G$-structures with $G$-jet-structures in the case of $G=\mathrm{GL}(n,\mathbb R)$.

Urs Schreiber and Igor Khavkine define basic notions of differential geometry as well as generalized partial differential equations in \cite{SyntheticPDEs} and most of their constructions, as they note, do not depend on the particular topos and the particular modality ``$\Im$'' they use. Crucially, they show that in the topos they use, the abstract definitions of formal disks and formally étale maps, analogous to \Cref{def:formal-disk} and \Cref{formally-etale} in this article, coincide with formal disks in manifolds and local diffeomorphisms of manifolds.

In the appendix of \cite{cherubini_rijke_2021}, it is shown, that in the Zariski topos,
the definitions of formal disks and formally étale maps \Cref{formally-etale}, \Cref{def:formal-disk} correspond to usual formal neighborhoods and formally étale maps of algebraic geometry. It is certainly noteworthy, that the abstract theory in this article combines quite well with synthetic differential geometry, which is used extensively in the preprint \cite{david-orbifolds}.

In \cite{SchreiberHamburg}, Urs Schreiber presented a couple of problems together with proposals for their solution to the homotopy type theory community. This article solves one of these problems, which is the construction of the moduli space of torsion-free G-jet-structures \Cref{def:space-of-torsion-free-jet-structures}, where \Cref{triviality-theorem} is an important step also mentioned by Schreiber. The proof of the latter theorem in this article is a vast simplification of Schreiber's proof, which relied on pasting of homotopy pullbacks, where the proof in this article uses simpler reasoning with dependent types. A solution to Schreiber's problem was already given in the phd thesis of the author \cite{wellen-thesis}, but not published under peer review. 

A minor difference to the construction of the moduli space proposed in \cite{SchreiberHamburg}
is that the G-jet-structures are checked for triviality on first-order infinitesimal disks,
while for this article, after discussing with Schreiber, full formal disks are used everywhere.
It is left to check in future work, that the construction given here type-theoretically,
yields the same space as the classical construction.

Important advantages of homotopy type theory for this work include the unusual conciseness for a higher categorical framework.
Furthermore, a proof-assistant software, in this case \notion{Agda}, can be used to check definitions and proofs written out in homotopy type theory.
This was of great help to the author during the development of the theory in this article and while learning the subject. 
The formalization can be viewed at \href{https://github.com/felixwellen/DCHoTT-Agda}{https://github.com/felixwellen/DCHoTT-Agda}, where \Cref{triviality-theorem} is to be found in the file \href{https://github.com/felixwellen/DCHoTT-Agda/blob/master/FormalDiskBundle.agda}{FormalDiskBundle.agda} and the central construction \Cref{def:space-of-torsion-free-jet-structures} in \href{https://github.com/felixwellen/DCHoTT-Agda/blob/master/G-structure.agda}{G-structures.agda}. \\

We will conclude this introduction by giving more intuition for the intended models.
This part is aimed in par\-ti\-cu\-lar at readers not familiar with higher stacks or synthetic differential geometry.

An important thing to note is that manifolds and other simple spaces of interest in differential geometry, are, maybe to the surprise of some readers, \emph{not} to be thought of
as \emph{higher} types. Note that this is also the case for the topological spaces in \cite{ShulmanRealCohesion}.
Instead, in the applications of interest, a manifold is usually a 0-truncated type.
The higher types in this context are given by passing from the ordinary, 1-categorical notion of Grothendieck toposes, to their higher categorical version. The latter includes the former, as the subcategory of 0-truncated objects. Thus, the spaces of interest, which already exist in the 1-categorical topos, are included in the 0-truncated types.

The theory in this article may however also be applied to objects more general than manifolds,
which are not 0-truncated.
One important example are quotient stacks. In addition, it is also possible to consider spaces,
which are not locally modeled on 0-truncated types. Both cases are not ruled out by \Cref{def:V-manifold}.

Furthermore, the ambient higher types admit the construction of classifying morphisms (see \Cref{definition-classifying-square-for-fiber-bundle}) of fiber bundles, which is crucial for the goals of the article. In addition to that, there are exceptionally easy ways to describe homotopy theoretic quotients of spaces by simple type theoretic constructions, which rely on higher identity types as well. This will be explained in the discussion preceding \Cref{def:space-of-torsion-free-jet-structures}.

The kind of modality that can be used to access the differential geometric structure of the objects of a topos from within type theory,
is in some fortunate cases generated by reducing algebras.
More precisely, in one of the most basic models,
namely simplicial sheaves on the category of $k\Alg^\op_{fp}$ finitely presented algebras over a field $k$
\footnote{This ensures that the nilradical is finitely generated -- if this is not the case, the definition of $\Im$ becomes more complicated.},
there is an endofunctor $\Im$ given by
\[ (\Im X)(A):\equiv X(A/\sqrt{0})\]
for any sheaf $X$. If reduction preserves covers, as it does for the Zariski topology, this is an idempotent left and right adjoint functor, which is enough to generate a modality on the topos.
The same approach yields modalities on toposes suitable for differential geometry. 
Roughly, this is achieved by passing to algebras of smooth functions and taking tensor products with nilpotent algebras, to add infinitesimals to the theory (see \cite{SyntheticPDEs}).

It is also possible to only add square-zero algebras instead of general nilpotent algebras, which makes the definition of formal disks (\Cref{def:formal-disk}) collapse to first-order neighborhoods - something very close to a tangent space. This leads to a simpler theory, which is easier to compare with differential geometry, but it also yields a category which doesn't have the right limits, so we will not consider it any further.

The functor $\Im$ appears in the differential part of a \notion{Differential Cohesive Topos}, a notion due to Urs Schreiber \cite{SchreiberDcct}[Definition 4.2.1], extending Lawvere's Axiomatic Cohesion \cite{Lawvere07}.
The differential structure is also used on toposes of Set-valued sheaves \cite{SyntheticPDEs},
where it is applied to a site suitable for differential geometry and therefore spaces modeled on vector spaces over the reals.

Since this modality $\Im$, that we will use in our type theory, allows us to build at least some abstract differential geometry relative to it,
one might ask what role the external functor $\Im$ from above plays in conventional geometry.
The answer is that concepts very close to it appear very early in the Grothendieck school of algebraic geometry,
which is no surprise at all, since algebras with nilpotent elements were specifically used to admit reasoning with this kind of infinitesimals.
However, the functor itself leaves the impression of a rather exotic concept under the names of deRham prestack \cite{GaitsgoryCrystals}, deRham stack, deRham space or infinitesimal shape and is usually used to represent D-modules over a smooth scheme or algebraic stack $X$ as quasi-coherent sheaves over $\Im X$.
A functor $\Im$ also exists in meaningful ways in non-commutative geometry \cite{KRSpacesNice}.
In the face of these rather advanced use cases of $\Im$, it might be irritating that we use it as a basis for differential geometry.
One reason $\Im X$ appears so infrequent in geometry might be, that it is quite hard to build intuition for what it is like as a space.
If $X$ is a structured space like a manifold or a scheme, $\Im X$ will only be a manifold or scheme in degenerate cases.
On the other hand, the relation provided by the map $\iota_X:X\to\Im X$ can be understood quite intuitively as ``\notion{infinitesimally close}''.
This is how we will start to develop differential geometry based on $\Im$. \\

\subsection*{Content}
\begin{itemize}
\item We define the formal disk at a point in a type in \ref{def:formal-disk}.
  These disks contain roughly similar information as the tangent and jet spaces in differential geometry.
  The definition is relative to a modality and for the $n$-truncation modality known as the connected cover of a homotopy type.
\item We introduce a notion of homogeneous type in \ref{def-homogeneous-type},
  which is tailored to our application as a basic building block for manifolds.
  It is proven, that the formal disk bundle of a homogeneous type is trivial.
\item Formally étale maps are defined in \ref{formally-etale}.
  Between manifolds, formally étale maps are known to correspond to local diffeomorphisms.
  We show stability properties of the class of formally étale maps, for example closure under arbitrary pullbacks.
  This definition is again relative to a modality.
\item Multiple definitions of fiber bundle are shown to be equivalent in \ref{thm-equivalence-of-fiber-bundle-definitions}.
  Notably, we show that if all fibers of a map are merely equal to a fixed type,
  then there is a trivializing cover.
\item For homogeneous types $V$, we define $V$-manifolds in \ref{def:V-manifold}.
  They are spaces infinitesimally modeled on $V$.
\item Finally, we define $G$-jet-structures in \ref{def:G-jet-structure} and their moduli space for a given ma\-ni\-fold.
  We also define torsion-free $G$-jet-structures and show that the trivial 1-jet-structure of a 1-group is torsion-free.
\end{itemize}
This project was suggested by Urs Schreiber in 2015 as a PhD thesis project for the author.
The (external) definitions of formally étale maps, $V$-manifolds and $G$-jet-structures have been used by Urs Schreiber and others.
Our contribution is the formulation in homotopy type theory and type-theoretic solution of the proposed problems
\footnote{These were the triviality of the formal disk bundle on a homogeneous type,
  local triviality of the formal disk bundle of a $V$-manifolds and definition of $G$-structures and torsion-free $G$-structures.},
which allowed us to produce a theory of low complexity and high clarity,
which is hard to imagine to be possible in a more classical framework like higher category theory in its simplicial incarnation.

\subsection*{Formalization}
The formalization located here:
\begin{center}
  \href{https://github.com/felixwellen/DCHoTT-Agda}{https://github.com/felixwellen/DCHoTT-Agda}
\end{center}
covers everything up to and including the definition of $G$-jet-structures, but not definitions building on top of that. However, crucial ingredients for the construction of the moduli-space of $G$-jet-structures and torsion-free $G$-jet-structures, like the chain rule, are checked.
It turned out that the necessary engineering work to actually combine those ingredients is not justified by the gain in understanding. Furthermore, before the code is used as a basis for future work, it should be ported to a suitable library. 

\subsection*{Acknowledgments}
The idea of using modalities in homotopy type theory in the way present in this work is due to Urs Schreiber and Mike Shulman \cite{SchreiberHamburg} \cite{ShulmanSchreiber},
Schreiber was one of the supervisors of the author's thesis.
He provided the author with all the categorical versions of the important geometric definitions,
as well as the main theorems and category theoretic proofs leading to the type theoretic version of his Higher Cartan Geometry
presented in this article.
Adaptions to homotopy type theory of Schreiber's original proofs are included in the author's thesis \cite{wellen-thesis}.
The proofs in this article make more use of type theoretic dependency which shortens the arguments a lot in most cases.
Some concepts needed reformulation and additional theorems were needed to make the main result,
the construction of moduli spaces of torsion-free $G$-jet-structures, possible.

Schreiber explained a lot of mathematics important to this work to the author on his many visits in Bonn
and answered countless questions via email.

During the time of writing his thesis and on later occasional visits,
the author profited a lot from his working groups in Karlsruhe.
This work wouldn't be the same without the discussions with and the Algebra knowledge of Tobias Columbus and Fabian Januszewski and the support of Frank Herrlich, Stefan Kühnlein and other members of the Algebra group and the Didactics group.
On a couple of visits in Darmstadt, Ulrik Buchholtz, Thomas Streicher and Jonathan Weinberger listened carefully to various versions of the theory in this article and made lots of helpful comments. Two questions of Ulrik Buchholtz led directly to propositions in this article (part of \ref{thm-equivalence-of-fiber-bundle-definitions} and \ref{generate-manifolds}).

A short visit in Nottingham and discussion with Paolo Capriotti, Nicolai Kraus and Thorsten Altenkirch also helped in the early stages of the theory and had an impact on the authors agda knowledge.

The discussion with the Mathematics Research Community group, helped the author a lot to understand Differential Cohesion better.
The research events in this line were sponsored by the National Science Foundation under Grant Number DMS 1641020.
The group work for the Differential Cohesion group at this event was organized by Dan Licata and Mike Shulman.
The group member Max S.\ New later read part of the thesis and made an important suggestion for an improvement of the definition of fiber bundle.

The improvements on this work were developed on a Postdoc position in Steve Awodey's group at Carnegie Mellon University,
sponsored by The United States Air Force Research Laboratory
under agreement number FA9550-15-1-0053.
The good atmosphere with lots of opportunities of discussion with local homotopy type theorists as well as the many visitors helped a lot.
Steve Awodey gave the author lot's of opportunities to present his work and new ideas to the locals and the guests and made lots of helpful discussions possible.
One consequence important to this work was a joint, successful effort with Egbert Rijke, to understand formally étale maps better ---
another countless discussions with Jonas Frey about abstract Geometry, the role of higher categorical structures therein and Type and Category Theory in general.
During that time in Pittsburgh, comments of and discussions with Mike Shulman, Mathieu Anel, André Joyal, Eric Finster, Dan Christensen and Marcelo Fiore led to improvements and helped the author to understand many things important to this article better.

Finally, the author is very thankful to two anonymous reviewers who read the article with great care and had many helpful comments.
The questions of the reviewers led to improvements of the content and their suggestions improved the presentation a lot.
Work on these revisions was carried out on a position in Gothenburg which was funded by US-Army grant W911NF-21-1-0318 and the ForCUTT project, ERC advanced grant 101053291.

\subsection*{Competing interests}
There are no competing interests.\footnote{This remark was added, because the submission process of MCSC requires it.}

\section{Modal homotopy type theory}

\subsection{Terminology and notation}

Mostly, we use the same terminology and notation as the HoTT-Book \cite{UFP}. 
However, there are a few exceptions. 
To denote terms of type $\prod_{x\inT A}B(x)$ we use the notation for $\lambda$-expressions from pure mathematics, i.e. $x\mapsto f(x)$.
There are no implicit propositional truncations. 
If the propositional truncation of a statement is used, it is indicated by the word ``merely''.
Phrases like ``for all'' and ``there is'' are to be interpreted as $\prod$- and $\sum$-types.
For example, the sentence
\begin{center}
  For all $x\inT A$ we have $t\inT B(x)$.
\end{center}
is to be read as the statement describing the term $(x\inT A)\mapsto t$ of type $ \prod_{x\inT A} B(x)$.
We sometimes write $f_a$ for the application of a dependent function $f\inT \prod_{x\inT A} B(x)$ to $a\inT A$, instead of $f(a)$.

Furthermore, similar to \cite{ShulmanRealCohesion}, when dealing with identity types, we avoid topology and geometry related words. 
For example, we write ``equality'' instead of ``path'' and ``2-cell'' instead of ``homotopy'', 
to avoid confusion with the notions of paths and homotopies for the classical geometric objects we like to study by including them in our theory as 0-types.
We use $p \bu q$ to denote the concatenation of equalities $p$ and $q$.
We say that $x$ is unique with some properties, if the type of all $x$ with these properties is contractible.

\subsection{Preliminaries from homotopy type theory}

We use a fragment of the Type Theory from \cite{UFP}.
Function extensionality is always assumed to hold.
Furthermore, we assume a propositional truncation modality ``$\|\_\|$'' and univalent universes.

In the next section we will give axioms for a modality ``$\Im$'', which will be assumed throughout the article.
Some knowledge of the basic concepts in \cite{UFP} is assumed.
In addition, we will use more facts about pullbacks than presented in \cite{UFP}, which we will list in this section.

It is very useful to switch between pullback squares and equivalences \emph{over} a morphism.
We start with the latter concept.
\begin{definition}
  Let $f:A\to B$ be a map and $P:A\to \mathcal U$, $Q:B\to \mathcal U$ be dependent types.
  \begin{enumerate}
  \item A \notion{morphism over $f$} or \notion{fibered morphism} is a
    \[ \varphi:\prod_{x:A}P(x) \to Q(f(x)). \]
  \item An \notion{equivalence over $f$} or \notion{fibered equivalence} is a
    \[ \varphi:\prod_{x:A}P(x) \simeq Q(f(x)). \]
  \end{enumerate}
\end{definition}
For every morphism over $f:A\to B$ as above, we can construct a square
\footnote{By stating that it is a ``square'' we implicitly assume that there is a 2-cell letting it commute, which is considered to be part of the square.
  In this particular case, the 2-cell is trivial.}
\begin{center}
  \begin{tikzcd}[auto]
    \sum_{x:A} P(x)\arrow[r]\arrow[d,"\pi_1",swap] & \sum_{x:B} Q(x)\arrow[d,"\pi_1"] \\
    A\arrow[r,"f",swap] & B \\
  \end{tikzcd}
\end{center}
where the top map is given as $(a,p_a)\mapsto (f(a),\varphi_a(p_a))$. 
This square will turn out to be a pullback in the sense we are going to describe now, 
if and only if $\varphi$ is an equivalence over $f$.

For a cospan given by the maps $f:A\to C$ and $g:B\to C$,
we can construct a pullback square:
\begin{center}
  \begin{tikzcd}[auto]
    \sum_{x:A, y:B}f(x)=g(y) \arrow[r,"\pi_2",swap]\arrow[d,"\pi_1"] & B\arrow[d,"g"] \\
    A\arrow[r,"f",swap] & C \\
  \end{tikzcd}
\end{center}
Then, for any other completion of the cospan to a square
\begin{center}
  \begin{tikzcd}[auto]
    X \arrow[d,swap,"\varphi_A"] \arrow[r,"\varphi_B"] & |[alias=B]| B \arrow[d,"g"] \arrow[Rightarrow, from=A, to=B,shorten <= 1em, shorten >= 1em,"\eta"] \\
    |[alias=A]| A \arrow[r,"f",swap] & C
  \end{tikzcd}
\end{center}
where $\eta:\prod_{x:X}g(x)=f(x)$ is a 2-cell letting it commute,
an induced map to the pullback is given by $x\mapsto (\varphi_A(x),\varphi_B(x),\eta_x)$.
\begin{definition}
  A square is given by four maps as above and a 2-cell like $\eta$. A square is a \notion{pullback square} if the induced map described above is an equivalence.
\end{definition}
To reverse the construction of a square for a morphism over ``$f$'' above,
we can start with a general square:
\begin{center}
  \begin{tikzcd}[auto]
    X \arrow[d,"p_A",swap] \arrow[r,"g"] & |[alias=Y]| Y \arrow[d,"p_B"] \arrow[Rightarrow, from=Y, to=A,shorten <= 1em, shorten >= 1em,"\eta"] \\
    |[alias=A]| A \arrow[r,"f",swap] & B
  \end{tikzcd}
\end{center}
Let $P:A\to\mathcal U$ and $Q:B\to\mathcal U$ be the fiber types of the vertical maps, 
i.e.
\begin{align*}
  &P(a:A):\equiv \sum_{x:X}p_A(x)=a \\
  &Q(b:B):\equiv \sum_{y:Y}p_B(y)=b 
\end{align*}
Then, for all $a:A$, a morphism $\varphi_a:P(a)\to Q(a)$ is given as 
\[ \varphi_a((a,(x,p))):\equiv (f(a),(g(x),\eta_x\bu f(p))).\]
So $\varphi$ is a morphism from $P$ to $Q$ over $f$.
The following statement is quite useful and will be used frequently in this article:
\begin{lemma}
  \label{rem-pullback-fiberwise-equiv}
  \begin{enumerate}
  \item A square is a pullback if and only if the induced fibered morphism is an equivalence.
  \item A fibered morphism is an equivalence, if and only if the corresponding square is a pullback.
  \end{enumerate}
\end{lemma}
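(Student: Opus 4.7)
The plan is to reduce both parts to the standard HoTT fact (see Theorem 4.7.7 of \cite{UFP}) that a morphism $\sum_{a:A} P(a) \to \sum_{a:A} Q(a)$ over $A$ of the form $(a,p) \mapsto (a, \psi_a(p))$ is an equivalence if and only if each component $\psi_a : P(a) \to Q(a)$ is an equivalence. Both (a) and (b) will then follow because the two constructions displayed just before the lemma (square-to-fibered-morphism and fibered-morphism-to-square) are essentially mutually inverse under the standard identifications.

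For part (a), I would begin with the given square with vertical maps $p_A : X \to A$ and $p_B : Y \to B$ and 2-cell $\eta$. The total space $X$ is equivalent to $\sum_{a:A} P(a)$ via $y \mapsto (p_A(y), (y, \refl))$, where $P(a) :\equiv \sum_{x:X} p_A(x) = a$. The standard pullback $\sum_{x:A, y:Y} f(x) = p_B(y)$ can be rewritten, by reassociating $\Sigma$-types and flipping the identity type, as $\sum_{a:A} Q(f(a))$, where $Q(b) :\equiv \sum_{y:Y} p_B(y) = b$. Under these equivalences, the canonical comparison map $X \to \sum_{x:A, y:Y} f(x) = p_B(y)$ translates into the fibered map $(a,p) \mapsto (a, \varphi_a(p))$ over $A$. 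The recalled fiberwise equivalence principle then gives directly: the comparison map is an equivalence (that is, the square is a pullback) if and only if each $\varphi_a$ is an equivalence (that is, $\varphi$ is a fibered equivalence).

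For part (b), I would start from a fibered morphism $\varphi : \prod_{a:A} P(a) \to Q(f(a))$, form the associated square via the displayed construction, and observe that the fibers of its vertical projections $\pi_1$ are canonically equivalent (by singleton contraction) to $P$ and $Q$ respectively, and that under these equivalences the fibered morphism induced from the square according to the square-to-morphism construction agrees with $\varphi$. Hence (b) reduces to (a) applied to this square.

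The main obstacle I expect is purely bookkeeping: verifying that the chain of $\Sigma$-type re-associations, path inversions, and singleton contractions really does transport the canonical pullback comparison map into the fiberwise form $(a,p)\mapsto(a,\varphi_a(p))$, and dually that the two constructions compose to the identity up to canonical equivalence. This is routine in the sense that each individual step is an instance of a standard $\Sigma$- or identity-type manipulation, but it is somewhat fiddly to assemble, which explains why the lemma is worth stating explicitly rather than being left as an immediate observation.
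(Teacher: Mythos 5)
The paper states this lemma without giving a proof of its own, treating it as a standard fact about the constructions displayed just before it. Your argument is correct and is exactly the standard route the paper implicitly relies on: identify $X\simeq\sum_{a:A}P(a)$ and the canonical pullback $\sum_{x:A,y:Y}f(x)=p_B(y)\simeq\sum_{a:A}Q(f(a))$, transport the comparison map into the total map of the fibered morphism, and invoke the fiberwise-versus-total equivalence criterion (Theorem~4.7.7 of the HoTT book), with part (b) reducing to part (a) by singleton contraction on the fibers of the projections.
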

Now, the following corollary can be derived by using the fact that equivalences are stable under pullback:
\begin{corollary}
  \label{sum-equivalence-over}
  Let $f:A\to B$ be an equivalence, $P:A\to\mathcal U$, $Q:B\to\mathcal U$ dependent types and $\varphi:\prod_{x:A}P(x)\to Q(f(x))$ an equivalence over $f$.
  Then the induced map 
  \[ \left(\sum_{x:A}P(x)\right) \to \left(\sum_{x:B} Q(x)\right)\] 
  is an equivalence.
\end{corollary}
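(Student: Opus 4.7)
The plan is to combine Lemma \ref{rem-pullback-fiberwise-equiv} with the stability of equivalences under pullback, as suggested by the text.

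First I would observe that the fibered equivalence $\varphi$ gives rise, by the construction made right before Lemma \ref{rem-pullback-fiberwise-equiv}, to the commuting square
\begin{center}
  \begin{tikzcd}[auto]
    \sum_{x:A} P(x)\arrow[r]\arrow[d,"\pi_1",swap] & \sum_{y:B} Q(y)\arrow[d,"\pi_1"] \\
    A\arrow[r,"f",swap] & B
  \end{tikzcd}
\end{center}
whose top map is precisely the induced map $(a,p) \mapsto (f(a),\varphi_a(p))$ in question. By part (b) of Lemma \ref{rem-pullback-fiberwise-equiv}, because $\varphi$ is a fibered equivalence, this square is a pullback.

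Next I would invoke that equivalences are stable under pullback: the right-hand vertical map $\pi_1 : \sum_{y:B} Q(y) \to B$ is obtained from the top map by pullback along $f$ (since the square is a pullback, we may read it either way). More directly: the bottom map $f$ is an equivalence by hypothesis, and pulling back an equivalence along any map yields an equivalence, so the top map must also be an equivalence. This is the desired conclusion.

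The only subtlety worth being careful about is orienting the pullback square correctly when applying the stability principle — we need that $f$, being the map along which we pull back, is the one assumed to be an equivalence, and then the opposite edge of the pullback square inherits this property. Once this orientation is fixed, the argument is a direct two-step application of the cited lemma and the pullback-stability of equivalences; no further calculation is needed.
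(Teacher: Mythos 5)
Your argument is correct and is exactly the paper's intended route: the paper derives this corollary immediately from Lemma \ref{rem-pullback-fiberwise-equiv} together with pullback-stability of equivalences, which is precisely your two-step argument. Your care about the orientation of the square (the top map is the pullback of $f$ along the projection $\sum_{y:B}Q(y)\to B$) is the right point to be careful about, and you handle it correctly.
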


\subsection{Modalities}

From this section on, we will always assume a \emph{modality} $\Im$.
We use the definition of a uniquely eliminating modality from \cite{egbert-bas-mike},
which is equivalent to the definition given in \cite[Section 7.7]{UFP}.
More on modalities and their relation to concepts in category theory can be found in \cite{egbert-bas-mike}.
We deviate from the usual symbol for modalities, which would be ``$\bigcirc$'' to remind us that while we will technically work with a general modality,
we have some particular kind of modality in mind.
Furthermore, the work in this article could be reused in a type theory which provides more modal operators from differential cohesion,
for example in the work in progress \cite{david-orbifolds} which also uses homotopy type theory as a basis and where $\Im$ is called \emph{crystalline modality}.

The modality $\Im$ is also used in cateogry theory based differential cohesion\footnote{For example in \cite{SchreiberDcct} and \cite{SyntheticPDEs}.} and is called
infinitesimal shape\footnote{In the literature outside of differential cohesion, there is also the name ``deRham stack''.}.

\begin{axiom}
  \label{axiom-coreduction}
  From this point on, we assume existence of a map $\Im: \mathcal U\to \mathcal U$ 
  and maps $\iota_A: A \to \Im A$ for all types $A$, subject to this condition:
  For any $B:\Im A\to\mathcal U$, the map
  \[ \_\circ\iota_A:\left(\prod_{a\inT \Im A}\Im B(a)\right) \to\left(\prod_{a\inT A}\Im B(\iota_A(a))\right)  \]
  is an equivalence.
\end{axiom}
We call the inverse of the equivalence \notion{$\Im$-elimination}.
Elimination in type theory is a principle which lets us define maps starting in an inductive type like the natural numbers.
For example,
eliminating from the natural numbers $\mathbb{N}$ to a dependent proposition $P:\mathbb{N}\to \mathcal U$ means essentially to prove the proposition for each possible way to construct a natural number, which is either to take it to be the constant $0$ or the successor $s(n)$ of another natural number $n$.

The analogy to $\Im$-elimination is, that to eliminate from $\Im A$ into the dependent modal type $\Im B(\_)$,
we only need to provide a value for the case that $x:\Im A$ is of the form $\iota_A(y)$.
This is exactly what the inverse of the map in axiom \ref{axiom-coreduction} allows us to do.
A different way to put this is that $\Im A$ has the same elimination principle as a inductive type with constructor $\iota_A:A\to \Im A$
would have, except that it can only be used to construct functions with modal codomain.

Note that it is possible to conclude a variant of $\Im$-elimination from axiom \ref{axiom-coreduction},
where $\Im$ is not applied to the type family $B$, but the type family is required to have values in $\Im$-modal types\footnote{See \Cref{def-modal}.}.

Note that the equivalence in axiom \ref{axiom-coreduction} specializes to the universal property of a reflection if the family $B$ is constant:
\begin{center}
  \begin{tikzcd}[auto]
    A\arrow[r,"\iota_A"]\arrow[dr,swap,"f"] & \Im A\arrow[d,dashed,"\exists!\psi"] \\
     & \Im B 
  \end{tikzcd}
\end{center}
i.e. for all types $B$ and all $f:A\to \Im B$, we get a unique $\psi$ letting the triangle commute up to a 2-cell.
Unique means here, there is a contractible type of maps with 2-cells letting the triangle commute.
That type is also a fiber of the equivalence ``$\_\circ\iota_A$'', so we do know that it is contractible.

We will make use of this in showing that $\Im$ is idempotent in the following sense:
\begin{proposition}
  For all types $A$, the map $\iota_{\Im A}:\Im A\to \Im(\Im A)$ is an equivalence.
\end{proposition}
\begin{proof}
  By the universal property we just discussed, we get a candidate for an inverse to $\iota_{\Im A}$, which we call $\varphi$:
  \begin{center}
    \begin{tikzcd}[auto]
      \Im A\arrow[r,"\iota_{\Im A}"]\arrow[dr,swap,"\id"] & \Im (\Im A)\arrow[d,dashed,"\exists!\varphi"] \\
      & \Im A 
    \end{tikzcd}
  \end{center}
  By construction, $\varphi$ is already a left inverse of $\iota_{\Im A}$.
  We consider the diagram
  \begin{center}
    \begin{tikzcd}[auto]
      & \Im (\Im A)\arrow[d,"\varphi"]\arrow[dd, bend left=60, "\id"] \\
      \Im A\arrow[ru,"\iota_{\Im A}"]\arrow[r,swap,"\id"]\arrow[dr, swap, "\iota_{\Im A}"]  & \Im A\arrow[d, "\iota_{\Im A}"] \\
      & \Im (\Im A)
    \end{tikzcd}
  \end{center}
  and conclude that $\varphi$ is also a right inverse by uniqueness.
\end{proof}

Like reflections determine a subcategory, $\Im$ determines a subuniverse of the universe $\mathcal U$ of all types
\footnote{We implicitly assume a hierarchy of universes $\mathcal U_i$, but only mention indices if there is something interesting to say about them. }.
\begin{definition}
  \label{def-modal}
  \begin{enumerate}
  \item A type $A$ is \notion{$\Im$-modal} if $\iota_A$ is an equivalence.
  \item The \notion{universe of $\Im$-modal types} is
    \[ \mathcal U_\Im:\equiv\sum_{A:\mathcal U}(\text{$A$ is $\Im$-modal}) \]
  \end{enumerate}
\end{definition}
From what we proved above, all types $\Im A$ will be modal. 

As we explained in the introduction, we will not be very interested in spaces of the form ``$\Im X$'',
but more in the ``quotient map'' $\iota_X:X\to \Im X$, which we will view as identifying infinitesimally close points.

Like a functor, $\Im$ extends to maps and we get a naturality square for $\iota$:
\begin{definition}
  \label{definition-containing-naturality}
  \begin{enumerate}[label=(\roman*)]
  \item For any function $f: A\to B$ between arbitrary types $A$ and $B$, we have a function:
    \[ \Im f : \Im A \to \Im B  \]
    given by $\Im$-elimination.
  \item For any function $f: A\to B$ between arbitrary types $A$ and $B$, 
    there is a 2-cell $\eta$ witnessing that the following commutes:
    \begin{center}
      \begin{tikzcd}[auto]
        A\arrow[r,"\iota_A"]\arrow[d,swap,"f"] & \Im A\arrow[d,"\Im f"]\arrow[dl, Rightarrow, shorten <= 1em, shorten >= 1em,"\eta_f"] \\
        B\arrow[r,"\iota_B", swap] & \Im B 
      \end{tikzcd}
    \end{center}
  \end{enumerate}
\end{definition}
It is also straightforward to prove that the application of $\Im$ to maps commutes with composition of 
maps up to equality and preserves identities up to equality.
And in general, we expect that any coherence between these equalities needed in practice can be constructed.

\begin{remark}
   For any 2-cell $\eta: f \Rightarrow g$, we have a 2-cell between the images:
    \[ \Im \eta: \Im f \Rightarrow \Im g\text{.}\]
\end{remark}

$\Im$-Modal types have various closedness properties, which we review in the following lemma.
\begin{proposition}
  \label{lemma:retracts-pi-sigma}
  Let $A$ be any type and $B: A\to \mathcal U$ be such that for all $a\inT A$ the type $B(a)$ is $\Im$-modal.
  \begin{enumerate}
  \item Retracts of $\Im$-modal types are $\Im$-modal.
  \item The dependent product
    \[ \prod_{a\inT A}B(a)\]
    is $\Im$-modal. Note that $A$ is not required to be $\Im$-modal here and this implies all function spaces with $\Im$-modal codomain are $\Im$-modal.
  \item If $A$ is $\Im$-modal, the sum 
    \[ \sum_{a\inT A}B(a) \]
    is $\Im$-modal.
  \item $\Im$-modal types have $\Im$-modal identity types.
  \end{enumerate}
\end{proposition}
\begin{proof}
  \begin{enumerate}
  \item   A type $R$ is a retract of $B$ if there are maps $r\inT B \to R$ and $\iota\inT R \to B$, 
    such that $r\circ \iota$ is equal to the identity. 
    For all $\Im$-modal $B$ and retracts $R$ of $B$ we have the following diagram:
    \begin{center}
      \begin{tikzcd}
          R\arrow[d, "\iota_R"]\arrow[r, "\iota"]\arrow[rr, bend left, "\id"] & B\arrow[d, "\iota_B"]\arrow[r, "r"] & R\arrow[d, "\iota_R"] \\
          \Im R\arrow[r, "\Im\iota"]\arrow[rr, bend right, "\id"] & \Im B\arrow[r, "\Im r"] & \Im R \\
      \end{tikzcd}
    \end{center}
    Since $\iota_B$ is an equivalence, it has an inverse and by the diagram, 
    $r \circ \iota_B^{-1} \circ\Im\iota$ is a biinverse to $\iota_R$.
  \item This is proved, up to equivalence, in \cite[Theorem 7.7.7]{UFP}.
  \item This is \cite[Theorem 7.7.4]{UFP}.
  \item This is \cite[Lemma 1.25]{egbert-bas-mike}.
  \end{enumerate}
\end{proof}

One immediate consequence is $\Im 1 \simeq 1$ -- this is the only provably $\Im$-modal type.
We can not expect to prove more types to be $\Im$-modal, since there is always the modality
that maps all types to $1$, so $1$ \emph{could} be the only $\Im$-modal type.

The following is a slight variation of \cite{egbert-bas-mike}[Lemma 1.24], and plays a central role in the abstract \cite{oxford-abstract}, which was the beginning of \cite{cherubini_rijke_2021}:
\begin{proposition}
  \label{lemma-compute-modal-sum}
  Let $A$ be a type and $B\inT \Im A \to \mathcal U$ a dependent type. Then the induced map is an equivalence:
  \[ \Im \left(\sum_{x\inT A} B(\iota_A(x))\right) \simeq \left(\sum_{x\inT \Im A} \Im(B(x)) \right). \]
\end{proposition}

A more category theoretic implication of this proposition is that for the map
\[
        \pi_1: \left(\sum_{x\inT A} B(\iota_A(x))\right) \to A
\]
taking fibers commutes with application of $\Im$. Here, $\pi_1$ is an example of a formally étale map, which we will introduce in the next section.
More abstractly, this relates to the principle in algebraic topology,
that homotopy fibers coincide with ordinary fibers of certain fibrations.
This point is highlighted and used in \cite{david-fibrations}.

\section{A basis for differential geometry}
\subsection{Formal disks}
\label{section-formal-disks}

We will start to build geometric notions on top of the modality $\Im$ and its unit $\iota$.
In the intended applications the modality $\Im$ provides us with a notion of infinitesimal proximity.
To see if two points $x,y$ in some type $A$ are infinitesimally close to each other, 
we map them to $\Im A$ and ask if the images are equal.
\begin{definition}
  Let $x,y\colon A$. Then we have a type which could be read ``$x$ is \notion{infinitesimally close} to $y$'' and is given as:
  \[ x\sim y:\equiv (\iota_A(x)=\iota_A(y)). \]
\end{definition}
Of course, this is in general not a proposition, but it is useful to think about $\iota_A(x)=\iota_A(y)$ in this way.
The name ``infinitesimally close'' is a poor choice for a general modality\footnote{The concepts we will build up in this section are still of interest for other modalities, but this will be less and less true towards the end of this article.}, so the reader should keep in mind from now on, that the terminology is adapted to a modality in the intended applications\footnote{Which we describe in the introduction.}.

It turns out that all morphisms of types already respect this notion of infinitesimal closedness,
i.e.\ if two points are infinitesimally close to each other their images are close as well.

\begin{remark}
  \label{mapping-preserves-infinitesimal-proximity}
  If $x,y\colon A$ are infinitesimally close then for any map $f\colon A\to B$, the images $f(x)$ and $f(y)$ are infinitesimally close.
  More precisely, we have an induced function
  \[ \tilde{f}:(x\sim y)\to (f(x)\sim f(y))\]
\end{remark}
\begin{proof}
  We construct a map between the two types $\iota_A(x)=\iota_A(y)$ and $\iota_B(f(x))=\iota_B(f(y))$.
  By \ref{definition-containing-naturality} we can apply $\Im$ to maps and get a map $\Im f\colon \Im A \to \Im B$.
  So we can apply $\Im f$ to an equality $\gamma\colon \iota_A(x)=\iota_A(y)$ to get an equality
  \[\Im f (\gamma)\colon \Im f(\iota_A(x))=\Im f(\iota_A(y))\]
  Again by \ref{definition-containing-naturality}, we know that we have a naturality square:
  \begin{center}
    \begin{tikzcd}
      A\arrow[r,"\iota_A"]\arrow[d,swap,"f"] & \Im A\arrow[d,"\Im f"]\arrow[dl, Rightarrow, shorten <= 1em, shorten >= 1em,"\eta_f"] \\
      B\arrow[r,"\iota_B", swap] & \Im B 
    \end{tikzcd}
  \end{center}
  and hence equalities $\eta_f(x)\colon\Im f(\iota_A(x))= \iota_B(f(x))$ and $\eta_f(y)\colon \Im f(\iota_A(y))= \iota_B(f(y))$.
  This yields an equality of the desired type:
  \[ \eta_f(x)^{-1}\bu \Im f(\gamma) \bu \eta_f(y) \]
\end{proof}

A formal disk at a point is the ``collection'' of all other points infinitesimally close to it: 

\begin{definition}
  \label{def:formal-disk}
  Let $A$ be a type and $a\inT A$. 
  The type $\DD_a$ defined below in three equivalent ways is called the \notion{formal disk at $a$}.
  \begin{enumerate}[label=(\roman*)]
  \item $\DD_a$ is the sum of all points infinitesimally close to $a$, i.e.:
    \[ \DD_a\defequal\sum_{x\inT A}\iota_A(x)=\iota_A(a)\]
  \item $\DD_a$ is the fiber of $\iota_A$ at $\iota_A(a)$.
  \item $\DD_a$ is defined by the following pullback square:
    \begin{center}
      \begin{tikzcd}[auto]
        \DD_a\arrow[r]\arrow[d] & 1\arrow[d,"\ast\mapsto \iota_A(a)"]\arrow[dl, phantom, "\text{\scriptsize(pb)}" description] \\
        A\arrow[r,"\iota_A",swap] & \Im A 
      \end{tikzcd}
    \end{center}
  \end{enumerate}
\end{definition}
The characterization (iii) is a verbatim translation of its topos theoretic analog \cite{SchreiberDcct}[Definition 5.3.50] to homotopy type theory. 
Therefore, in the model from \cite{SchreiberDcct}, composing a function on a manifold $M$ with $\iota_M(x)$ would yield an $\infty$-order jet of that function. 
Jets are higher order analogues of tangent vectors and the derivates of a function at a point $x$ can still be observed on a formal disk around $x$.
To say that formal disks are just infinitesimal extensions of the point, is supported by the following observation.

\begin{proposition}
  For any $x:X$ we have $\Im(\DD_x)=1$.
\end{proposition}
\begin{proof}
  Using \cref{lemma-compute-modal-sum} and proposition \ref{lemma:retracts-pi-sigma} (d) we compute:
  \begin{align*}
    \DD_x&\equiv \sum_{y:X}\iota_X(x)=\iota_X(y) \\
         &=\sum_{y:X}\Im(\iota_X(x)=(\iota_X(y))) \\
         &=\Im\left(\sum_{z:\Im X}\iota_X(x)=z\right) \\
         &=1\rlap{.}
  \end{align*}
\end{proof}

As morphisms of manifolds induce maps on tangent spaces, 
maps of types induce morphisms on formal disks,
containing information on the derivates of a morphism of all orders:
\begin{remark}
  If $f: A\to B$ is a map, there is a dependent function:
  \[ df: \prod_{x\inT A} \DD_x\to\DD_{f(x)}\]
  We denote the evaluation at $a\inT A$ with
  \[ df_a: \DD_a \to \DD_{f(a)} \]
  and call it \notion{the (generalized) differential of $f$ at $a$}.
\end{remark}
\begin{proof}
  To define $df$ we take the sum over the map from \ref{mapping-preserves-infinitesimal-proximity}:
  \[ df_a\defequal (x , \epsilon) \mapsto (f(x) , \eta_f^{-1}(x)\bu\Im f (\epsilon) \bu\eta_f(x))\]
  -- where $\eta_f(x)$ is the equality from the naturality of $\iota$.
\end{proof}

Some of the familiar rules for differentiation can be derived in this generality.
We will need only the chain rule:
\begin{lemma}
  \label{chain-rule}
  Let $f:A\to B$ and $g:B\to C$ be maps. Then the following holds for all $x:A$
  \[ d(g\circ f)_x=(dg)_{f(x)}\circ df_x.\]
\end{lemma}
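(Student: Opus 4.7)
The plan is to unfold the definition of the differential and then reduce the chain rule to two basic compatibilities of $\Im$: its preservation of composition of maps, and the fact that the naturality squares $\eta_f$ compose appropriately. Everything else is path algebra.

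First I would fix a point $(y,\epsilon) : \DD_x$, so $\epsilon : \iota_A(y) = \iota_A(x)$, and compute both sides. By the definition just after Remark~\ref{mapping-preserves-infinitesimal-proximity}, one has
\[ df_x(y,\epsilon) = \bigl(f(y),\; \eta_f^{-1}(y) \bu \Im f(\epsilon) \bu \eta_f(x)\bigr), \]
and then applying $dg_{f(x)}$ to this gives, by a second application of the same definition and the fact that $\Im g$ preserves concatenation and inverses of equalities,
\[ (dg_{f(x)} \circ df_x)(y,\epsilon) = \Bigl(g(f(y)),\; \eta_g^{-1}(f(y)) \bu \Im g(\eta_f^{-1}(y)) \bu \Im g(\Im f(\epsilon)) \bu \Im g(\eta_f(x)) \bu \eta_g(f(x))\Bigr). \]
On the other side,
\[ d(g\circ f)_x(y,\epsilon) = \bigl((g\circ f)(y),\; \eta_{g\circ f}^{-1}(y) \bu \Im(g\circ f)(\epsilon) \bu \eta_{g\circ f}(x)\bigr). \]
The first components are definitionally equal, so only the second components need to be identified.

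Next I would invoke two coherences for $\Im$, both of which are mentioned in the paragraph after Definition~\ref{definition-containing-naturality}: that $\Im(g\circ f) = \Im g \circ \Im f$ as maps $\Im A \to \Im C$, and that $\eta_{g\circ f}$ factors as the pasting of $\eta_f$ and $\eta_g$, i.e.\ for every $a:A$,
\[ \eta_{g\circ f}(a) = \Im g(\eta_f(a)) \bu \eta_g(f(a)). \]
Both are obtained by $\Im$-elimination from the respective naturality squares and are consequences of the uniqueness clause in Axiom~\ref{axiom-coreduction} applied to the defining property of $\Im(g\circ f)$.

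Finally, substituting these identifications into the expression $\eta_{g\circ f}^{-1}(y) \bu \Im(g\circ f)(\epsilon) \bu \eta_{g\circ f}(x)$, inverting the pasted equality on the left, and absorbing the $\Im g$'s into a single application gives exactly
\[ \eta_g^{-1}(f(y)) \bu \Im g\bigl(\eta_f^{-1}(y) \bu \Im f(\epsilon) \bu \eta_f(x)\bigr) \bu \eta_g(f(x)), \]
which matches the second component of $(dg_{f(x)} \circ df_x)(y,\epsilon)$. Function extensionality then upgrades this pointwise equality to the equality of functions $d(g\circ f)_x = dg_{f(x)} \circ df_x$ claimed in the statement. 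The main obstacle, and really the only non-mechanical step, is assembling the coherence $\eta_{g\circ f} = \Im g(\eta_f) \bu \eta_g \circ f$; once that is in hand the rest is path bookkeeping using that $\Im g$ is a function and therefore distributes over $\bu$ and inverses.
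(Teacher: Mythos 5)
Your proof is correct in outline, but it takes a genuinely different route from the paper. The paper's proof is a one-liner by universal property: since $\DD_{f(x)}$ and $\DD_{g(f(x))}$ are pullbacks, the differential is \emph{the} induced map into the pullback, so $d(g\circ f)_x$ and $(dg)_{f(x)}\circ df_x$ both solve the same factorization problem and are therefore equal by the contractibility of the space of induced maps. Your argument instead unfolds the explicit path-algebra formula for $df$ and reduces the claim to two coherences of $\Im$, namely $\Im(g\circ f)=\Im g\circ\Im f$ and the compatibility of the naturality cells $\eta$ with composition. What the paper's approach buys is that it never has to state these coherences: they are absorbed into the uniqueness clause of the pullback. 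What your approach buys is an explicit formula-level verification that would be closer to what one types into a proof assistant. One caveat on your key step: as written, the coherence $\eta_{g\circ f}(a)=\Im g(\eta_f(a))\bu\eta_g(f(a))$ does not typecheck, because the left-hand side is a path starting at $\Im(g\circ f)(\iota_A(a))$ while the right-hand side starts at $\Im g(\Im f(\iota_A(a)))$, and these are only propositionally equal. You must carry the homotopy $H:\Im(g\circ f)\sim\Im g\circ\Im f$ explicitly, state the coherence as $\eta_{g\circ f}(a)=H(\iota_A(a))\bu\Im g(\eta_f(a))\bu\eta_g(f(a))$, and correspondingly convert $\Im(g\circ f)(\epsilon)$ to $\Im g(\Im f(\epsilon))$ by naturality of $H$ before the cancellation goes through. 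All of this data is indeed jointly produced by a single $\Im$-elimination as you say (the type of extensions in Axiom~\ref{axiom-coreduction} is contractible), so the gap is repairable; it is exactly the bookkeeping that the paper's pullback argument packages away.
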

\begin{proof}
  Note that, in general, the differential $df_x$ is equal to the map induced by the universal property of $\DD_{f(x)}$ as a pullback.
  We can use this to get the desired ``functoriality'':
  \begin{center}
    \begin{tikzcd}
      \DD_x \arrow[d, dashed, "df_x"]\arrow[r]\arrow[dd, dashed, bend right=70, swap, "d(g\circ f)_x"] & A\arrow[r, "\iota_A"]\arrow[d, "f", swap] & \Im A \arrow[d, "\Im f"] \\
      \DD_{f(x)} \arrow[d, dashed, "(dg)_{f(x)}"]\arrow[r] & B\arrow[r, "\iota_B"]\arrow[d, "g", swap] & \Im B \arrow[d, "\Im g"] \\
      \DD_{g(f(x))} \arrow[r] & C\arrow[r, "\iota_C"] & \Im C
    \end{tikzcd}
  \end{center}
  -- the induced map $d(g\circ f)_x$ and the composition $(dg)_{f(x)}\circ df_x$ solve the same factorization problem, so they are equal.
\end{proof}

In differential geometry, the tangent bundle is an important basic construction consisting of all the tangent spaces in a manifold,
capturing first-order infinitesimal information.
In this abstract all-order setting, we can mimic the construction by combining all the formal disks of a space in a bundle, capturing infinitesimal information of all orders at once.
\begin{definition}
  \label{def-formal-disk-bundle}
  Let $A$ be a type.
  The type $T_\infty A$ defined in one of the equivalent ways below is called the \notion{formal disk bundle} of $A$.
  \begin{enumerate}[label=(\roman*)]
  \item $T_\infty A$ is the sum over all the formal disks in $A$:
    \[ T_\infty A\defequal\sum_{x\inT A} \DD_x \]
  \item $T_\infty A$ is defined by the following pullback square:
    \begin{center}
      \begin{tikzcd}[auto]
        T_\infty A\arrow[r]\arrow[d] & A\arrow[d,"\iota_A"]\drawpb{dl} \\
        A\arrow[r,"\iota_A",swap] & \Im A
      \end{tikzcd}
  \end{center}
  \end{enumerate}
\end{definition}
Note that despite the seemingly symmetric second definition,
we want $T_\infty A$ to be a bundle having formal disks as its fibers, so it is important to distinguish between the two projections and their meaning.
If we look at $T_\infty A$ as a bundle, meaning a morphism $p\colon T_\infty A\to A$,
we always take $p$ to be the first projection in both cases.
This convention agrees with the first definition -- taking the sum yields a bundle with 
fibers of the first projection equivalent to the formal disks.

For any $f\colon A \to B$ we defined the induced map $df$ on formal disks.
This extends to formal disk bundles.
\begin{definition}
  For a map $f\colon A\to B$ there is an induced map on the formal disk bundles, given as
  \[ T_\infty f\defequal (a , \epsilon) \mapsto (f(a), df_a(\epsilon))\]
\end{definition}

In differential geometry, the tangent bundle may or may not be trivial.
This is some interesting information about a space.
If we have a smooth group structure on a manifold $G$, i.e. a Lie-group,
we may consistently translate the tangent space at the unit to any other point.
This may be used to construct an isomorphism 
of the tangent bundle with the projection from the product of $G$ with the tangent space at the unit. 

It turns out that this generalizes to formal disk bundles 
and the group structure may be replaced by the weaker notion of a homogeneous type.

The notion of homogeneous type was developed by the author to satisfy two needs.
The first is to match the intuition of a pointed space, that is equipped with a continuous family of translations that map the base point to any given point.
The second need is to have just the right amount of data in all the proofs and constructions concerning homogeneous types.
It has not been investigated in what circumstances this definition of homogeneous spaces coincides with the various notions of homogeneous spaces in Geometry -- apart from the obvious examples given below.

\begin{definition}
  \label{def-homogeneous-type}
  A type $A$ is \notion{homogeneous}, if there are terms of the following types:
  \begin{enumerate}[label=(\roman*)]
  \item $e\inT A$
  \item $t\inT \prod_{x\inT A} A\simeq A$
  \item $p\inT \prod_{x\inT A} t_x(e)=x$
  \end{enumerate}
  Where $t$ is called the \notion{family of translations}
  and $e$ is called the \notion{unit} of $A$.
\end{definition}

\begin{examples}
  \begin{enumerate}
  \item Let $G$ be a group in the sense of \cite{UFP}[6.11], then $G$ is a homogeneous type with $x\bu\_$ or $\_\bu x$ as its family of translations.
  \item Let $G$ be an h-group, i.e. a type with a unit, operation and inversion that satisfy the group axioms up to a 2-cell.
    Then $G$ is a homogeneous type in the same two ways as above.
  \item As a notable special case, for any type $A$ and $\ast\inT A$, the loop space $\ast =_A \ast$ is homogeneous.
  \item Let $X$ be a connected H-space, then $X$ is homogeneous, again in two ways. See \cite{UFP}[8.5.2] and \cite{LicataFinster}[Section 4].
  \item Let $Q$ be a type with a quasigroup-structure, i.e. a binary operation $\_\bu\_$ such that all equations $a\bu x=b$ and $x\bu a=b$ have a contractible space of solutions,
    then $Q$ is homogeneous if it has a left or right unit. 
  \end{enumerate}
\end{examples}

In the following we will build a family of equivalences from one formal disk of a homogeneous type to any other formal disk of the space.
We start by observing how equivalences and equalities act on formal disks.
\begin{lemma}
  \label{equivalences-induce-equivalences-on-formal-disks}
  \begin{enumerate}
  \item
    If $f\colon A\to B$ is an equivalence, then
    \[ df_x\colon\DD_x\to \DD_{f(x)} \]
    is an equivalence for all $x\colon A$.
  \item
    Let $A$ be a type and $x,y\colon A$ two points.
    For any equality $\gamma\colon x=y$, we get an equivalence $\DD_x\simeq \DD_y$.
  \end{enumerate}
\end{lemma}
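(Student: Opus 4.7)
My plan is to derive both parts from the pullback characterization of $\DD_x$ together with the fiberwise-equivalence lemma (\ref{rem-pullback-fiberwise-equiv}).

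For part (1), I will work with the naturality square of $\iota$ at $f$:
\begin{center}
  \begin{tikzcd}[auto]
    A\arrow[r,"\iota_A"]\arrow[d,swap,"f"] & \Im A\arrow[d,"\Im f"]\arrow[dl, Rightarrow, shorten <= 1em, shorten >= 1em,"\eta_f"] \\
    B\arrow[r,"\iota_B", swap] & \Im B
  \end{tikzcd}
\end{center}
First I would observe that $\Im$ preserves equivalences: if $g$ is a homotopy inverse of $f$, then by the functoriality already recorded in \ref{definition-containing-naturality} and the compatibility of $\Im$ with composition up to equality, $\Im g$ is a two-sided inverse of $\Im f$ up to 2-cell. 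Thus $\Im f$ is an equivalence. Next I would note that any commutative square whose two vertical maps are equivalences is automatically a pullback (both vertical fibers being contractible, or equivalently by noting that composition with an equivalence preserves and reflects pullbacks).

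Applying Lemma \ref{rem-pullback-fiberwise-equiv} to the resulting pullback square, the induced map on fibers of the horizontal maps $\iota_A$ and $\iota_B$ is an equivalence at every point. Specializing to the point $\iota_A(x)$ gives an equivalence from $\DD_x = \mathrm{fib}_{\iota_A}(\iota_A(x))$ to $\mathrm{fib}_{\iota_B}(\Im f(\iota_A(x)))$; transporting along the naturality cell $\eta_f(x) : \Im f(\iota_A(x)) = \iota_B(f(x))$ identifies the target with $\DD_{f(x)}$. A direct unfolding shows that this composite equivalence is precisely $df_x$ as constructed above, since both send $(y,\epsilon)$ to $(f(y), \eta_f(y)^{-1}\bu \Im f(\epsilon) \bu \eta_f(x))$. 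The main subtlety I expect is exactly this bookkeeping step, matching the induced map on fibers with the defining formula of $df_x$; once the 2-cell $\eta_f$ is inserted correctly everything lines up.

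For part (2), I would proceed by path induction on $\gamma : x = y$. After reducing to $\gamma = \refl_x$, the desired equivalence $\DD_x \simeq \DD_x$ is simply the identity. Equivalently and more explicitly, one can transport along $\ap_{\iota_A}(\gamma)$: post-composition with the equality $\ap_{\iota_A}(\gamma)$ defines an equivalence $(\iota_A(z) = \iota_A(x)) \simeq (\iota_A(z) = \iota_A(y))$ naturally in $z$, and taking the sum over $z : A$ yields the desired equivalence $\DD_x \simeq \DD_y$. No real obstacle is anticipated here; the statement is essentially a consequence of transport.
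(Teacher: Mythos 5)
Your proof is correct, but part (1) is organized differently from the paper's argument. The paper never passes through the statement that the naturality square is a pullback: it directly shows that for each pair $x,y$ the map $(\iota_A(x)=\iota_A(y))\to(\iota_B(f(x))=\iota_B(f(y)))$ is an equivalence (conjugation by the naturality cells composed with the action of the equivalence $\Im f$ on identity types), views this as an equivalence over $f$ between the dependent types whose sums are $\DD_a$ and $\DD_{f(a)}$, and invokes Corollary \ref{sum-equivalence-over}; since the sum of that fibered equivalence \emph{is} $df$ by definition, no further identification is needed. You instead note that the naturality square has equivalences on two opposite sides, hence is a pullback, and then read off the fiberwise equivalence over $\Im f$ via Lemma \ref{rem-pullback-fiberwise-equiv}, fibering over $\Im A\to\Im B$ rather than over $A\to B$. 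Both routes rest on the same pullback/fiberwise-equivalence machinery and both are sound; the paper's choice buys a definitional match with $df$, while yours shifts the work into the bookkeeping step you flag, namely checking that the induced map on fibers of $\iota_A$, $\iota_B$, composed with transport along $\eta_f(x)$, unfolds to the formula for $df_x$ --- which it does, so the argument goes through. Your treatment of part (2) (transport in $x\mapsto\DD_x$, or equivalently path induction) is exactly the paper's.
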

\begin{proof}
  \begin{enumerate}
  \item Let us first observe, that for any $x,y\colon A$ the map $\iota_A(x)=\iota_A(y)\to \iota_B(f(x))=\iota_B(f(y))$ is an equivalence.
  This follows from the fact that it is equal to the composition of two equivalences. 
  One is the conjugation with the equalities from naturality of $\iota$,
  the other is the equivalence of equalities induced by the equivalence $\Im f$. 

  Now, for a fixed $a\colon A$ we have two dependent types, $\iota_A(a)=\iota_A(x)$ and $\iota_B(f(a))=\iota_B(f(x))$
  and an equivalence over $f$ between them. 
  The sum of this equivalence over $f$ is by definition $df$ and by \ref{sum-equivalence-over} a sum of a fibered equivalence is an equivalence.
\item The equivalence is just the transport in the dependent type $x\mapsto \DD_x$.
  \end{enumerate}
\end{proof}

We are now ready to state and prove the triviality theorem.
\begin{theorem}
  \label{triviality-theorem}
  Let $V$ be a homogeneous type and $\DD_e$ the formal disk at its unit. 
  Then the following is true:
  \begin{enumerate}
  \label{triviality-theorem-trivialization}
  \item For all $x\inT V$, there is an equivalence
    \[ \psi_x\colon \DD_x\to \DD_e \]
  \item $T_\infty V$ is a trivial bundle with fiber $\DD_e$, i.e. we have an equivalence $T_\infty V\to V\times \DD_e$ and a homotopy commutative triangle
    \begin{center}
      \begin{tikzcd}
        T_\infty V\arrow[dr, "\pi_1",swap]\arrow[rr, "\simeq"] & & V\times \DD_e\arrow[dl, "\pi_1"] \\
        & V & 
      \end{tikzcd}
    \end{center}
  \end{enumerate}
\end{theorem}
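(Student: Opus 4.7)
The plan is to construct a fiberwise equivalence $\DD_e \simeq \DD_x$ for each $x:V$ using the homogeneous structure, and then invoke Corollary \ref{sum-equivalence-over} to get the trivialization of $T_\infty V$ for free from its description as $\sum_{x:V}\DD_x$.

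For part (a), fix $x:V$. The homogeneous structure gives an equivalence $t_x : V \simeq V$ together with an equality $p_x : t_x(e) = x$. By part (a) of Lemma \ref{equivalences-induce-equivalences-on-formal-disks}, applied to $t_x$ at the point $e$, the differential
\[ d(t_x)_e : \DD_e \simeq \DD_{t_x(e)} \]
is an equivalence. By part (b) of the same lemma, the equality $p_x : t_x(e) = x$ yields an equivalence $\DD_{t_x(e)} \simeq \DD_x$ by transport in the dependent type $y \mapsto \DD_y$. Composing these two equivalences produces an equivalence $\DD_e \simeq \DD_x$, whose inverse we take as $\psi_x : \DD_x \simeq \DD_e$.

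For part (b), observe that the family $x \mapsto \psi_x^{-1} : \DD_e \simeq \DD_x$ is, by definition, an equivalence over the identity $\id_V : V \to V$ between the constant dependent type $x \mapsto \DD_e$ and the dependent type $x \mapsto \DD_x$. Corollary \ref{sum-equivalence-over} then immediately gives an equivalence
\[ V \times \DD_e \;\equiv\; \sum_{x:V} \DD_e \;\simeq\; \sum_{x:V} \DD_x \;\equiv\; T_\infty V, \]
and the map it produces is of the form $(x,\epsilon) \mapsto (x, \psi_x^{-1}(\epsilon))$, so its first component is the first projection. This gives the commuting triangle in (b) on the nose (or at worst up to a trivial 2-cell from definitional unfolding).

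The only real subtlety I expect is a bookkeeping one: being careful about the direction of the equivalences (we want $\psi_x$ going $\DD_x \to \DD_e$ for part (a) but the map used in the sum for part (b) goes the other way) and making sure that the composite of the two ingredients from Lemma \ref{equivalences-induce-equivalences-on-formal-disks} really is an equivalence fibered over $\id_V$ in the sense needed by Corollary \ref{sum-equivalence-over}. No new geometric input is needed beyond the already established functoriality of $\DD_{(-)}$ under equivalences and transport.
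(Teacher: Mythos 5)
Your proof is correct and follows essentially the same route as the paper: part (a) is identical (apply $d(t_x)_e$ from Lemma \ref{equivalences-induce-equivalences-on-formal-disks} and then transport along $p_x$, then invert), and for part (b) the paper simply writes out the map $(x,\epsilon)\mapsto(x,\psi_x(\epsilon))$ together with its explicit inverse rather than citing Corollary \ref{sum-equivalence-over}, which is exactly the fiberwise-to-total argument you invoke. Your appeal to \ref{sum-equivalence-over} is a legitimate (arguably cleaner) packaging of the same step, and the commutativity of the triangle follows as you say.
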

\begin{proof}
  \begin{enumerate}
  \item Let $x\inT V$ be any point in $V$.
    The translation $t_x$ given by the homogeneous structure on $V$ is an equivalence.
    Therefore, we have an equivalence $\psi_x'\colon \DD_e\to \DD_{t_x(e)}$ by \ref{equivalences-induce-equivalences-on-formal-disks}.
    Also directly from the homogeneous structure, we get an equality $t_x(e)=x$ 
    and transporting along it yields an equivalence $\DD_{t_x(e)}\to \DD_{x}$.
    So we can compose and invert to get the desired $\psi_x$.
  \item By Definition \ref{def-formal-disk-bundle} of the formal disk bundle, we have 
    \[ T_\infty V\defequal\sum_{x\inT V}\DD_x\]
    We define a morphism $\varphi\colon T_\infty V\to V\times\DD_e$ by 
    \[ \varphi((x,\epsilon_x))\defequal (x, \psi_x(\epsilon_x))\]
    and its inverse by 
    \[ \varphi^{-1}((x,\epsilon_x))\defequal (x, \psi_x^{-1}(\epsilon_x))\text{.}\]
    Now, to see $\varphi$ is an equivalence with inverse $\varphi^{-1}$, one has to provide equalities of types
    \begin{align*}
      & (x,\epsilon_x)=\varphi^{-1}(\varphi(x,\epsilon_x))=(x,\psi^{-1}(\psi(\epsilon_x))) \\
      \text{ and } & (x,\epsilon_e)=\varphi(\varphi^{-1}(x,\epsilon_e))=(x,\psi(\psi^{-1}(\epsilon_e)))
    \end{align*}
    -- which exist since the $\psi_x$ are equivalences by (a).
  \end{enumerate}
\end{proof}

In geometry, it is usually possible to add tangent vectors.
Our formal disks can at least inherit the group-like properties of a homogeneous type:

\begin{theorem}
  \label{homogeneous-sequence}
  Let $A$ be homogeneous with unit $e:A$. Then $\DD_e$ is homogeneous.
\end{theorem}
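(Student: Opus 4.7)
The plan is to exhibit the three pieces of data from Definition \ref{def-homogeneous-type} by transporting the homogeneous structure of $A$ across the unit $\iota_A$ and then correcting by the witnesses of infinitesimal closeness. Throughout let $(e,t,p)$ denote the homogeneous structure on $A$.

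The unit of $\DD_e$ will be the canonical point
\[ e' \defequal (e,\refl_{\iota_A(e)}) : \DD_e. \]
For the family of translations, given $x : \DD_e$, i.e.\ a pair $(a,\epsilon)$ with $a:A$ and $\epsilon : \iota_A(a) = \iota_A(e)$, I will construct $t'_x : \DD_e \simeq \DD_e$ as the composition of three equivalences:
\begin{enumerate}
\item the differential $d(t_a)_e : \DD_e \simeq \DD_{t_a(e)}$, which is an equivalence by part (a) of Lemma \ref{equivalences-induce-equivalences-on-formal-disks} because $t_a : A \simeq A$ is one;
\item transport along $p_a : t_a(e) = a$ in the family $x \mapsto \DD_x$, giving $\DD_{t_a(e)} \simeq \DD_a$ (an equivalence by part (b) of the same lemma);
\item the equivalence $\DD_a \to \DD_e$ defined by $(b,\gamma) \mapsto (b,\gamma \bu \epsilon)$, whose inverse post-composes with $\epsilon^{-1}$.
\end{enumerate}

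It remains to produce, for each $x = (a,\epsilon) : \DD_e$, an equality $p'_x : t'_x(e') = x$. Starting from $(e,\refl)$, the first map yields $(t_a(e),\, \eta_{t_a}^{-1}(e)\bu \Im t_a(\refl) \bu \eta_{t_a}(e))$; since $\Im t_a(\refl) = \refl$ and concatenating a path with its inverse is $\refl$, this simplifies to $(t_a(e),\refl)$. The transport step then produces $(t_a(e),\, \mathrm{ap}_{\iota_A}(p_a))$ in $\DD_a$, and post-composing with $\epsilon$ gives $(t_a(e),\, \mathrm{ap}_{\iota_A}(p_a) \bu \epsilon)$ in $\DD_e$. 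Lifting $p_a : t_a(e) = a$ to an equality of pairs in $\DD_e = \sum_{b:A} \iota_A(b) = \iota_A(e)$ requires checking that the transport of $\mathrm{ap}_{\iota_A}(p_a) \bu \epsilon$ along $p_a$ in the family $b \mapsto \iota_A(b) = \iota_A(e)$ equals $\epsilon$; this transport pre-composes with $\mathrm{ap}_{\iota_A}(p_a)^{-1}$, so the $\mathrm{ap}_{\iota_A}(p_a)$ factor cancels and the result is exactly $\epsilon$.

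The only genuinely fiddly part is the path algebra for $p'_x$, keeping track of the naturality 2-cell $\eta_{t_a}$ and the transport formula in a $\Sigma$-type whose second component is an identity type. Everything else is formal: the differential is defined precisely so that its action on $(e,\refl)$ is computable via $\eta$-naturality, and the construction is natural enough in the second component $\epsilon$ that no additional coherence is required, since we are only asked for one path $t'_x(e')=x$ per $x$ and not for any further coherence beyond Definition \ref{def-homogeneous-type}.
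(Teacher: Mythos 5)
Your proof is correct. The three maps you compose are each equivalences (by Lemma \ref{equivalences-induce-equivalences-on-formal-disks} and the fact that concatenation with a fixed path is an equivalence of identity types), and your computation of $t'_{(a,\epsilon)}(e')$ checks out: $d(t_a)_e$ sends $(e,\refl)$ to $(t_a(e),\refl)$ because $\mathrm{ap}_{\Im t_a}(\refl)\equiv\refl$ and the naturality 2-cell cancels against its inverse; transport along $p_a$ post-concatenates $\mathrm{ap}_{\iota_A}(p_a)$; and the $\Sigma$-path over $p_a$ pre-concatenates $\mathrm{ap}_{\iota_A}(p_a)^{-1}$, so the correction term cancels and the second component lands on $\epsilon$ exactly as you say.

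The route is genuinely different from the paper's, however. The paper first equips $\Im A$ with a homogeneous structure by $\Im$-induction (translations $t'$ with $t'_{\iota_A(x)}=\Im t_x$ and a witness $p'$ built by inducting on $y:\Im A$), and then obtains each translation of $\DD_e$ as the sum, via \ref{sum-equivalence-over}, of a four-step chain of equivalences $\bigl(\iota_A(e)=\iota_A(x)\bigr)\simeq\bigl(\iota_A(e)=\iota_A(t_y(x))\bigr)$ fibered over $t_y:A\simeq A$; the verification of $p''$ then traces a path through that chain and reduces to the defining equation of $p'$. Your construction instead reuses the equivalences $\DD_e\simeq\DD_a$ already appearing in the proof of the triviality theorem \ref{triviality-theorem} (differential of $t_a$ followed by transport along $p_a$) and corrects by concatenation with $\epsilon$, so no fresh $\Im$-induction is needed beyond what is packaged into the differential. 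What the paper's detour buys is the homogeneous structure on $\Im A$ itself, which is what makes $\DD_e\to A\to\Im A$ a sequence of homogeneous types as advertised in the introduction; your argument proves exactly the stated theorem with less machinery, at the cost of not producing that by-product.
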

\begin{proof}
  We look at the sequence
  \begin{center}
    \begin{tikzcd}
      \DD_e\arrow[r, "u_e"] & A\arrow[r, "\iota_A"] & \Im A
    \end{tikzcd}
  \end{center}
  where $u_e$ is the inclusion of the formal disk, given as the first projection.
  We will proceed by constructing a homogeneous structure on $\Im A$,
  note some properties of $\iota_A$ which could be part of a definition of \notion{morphism of homogeneous types}
  and finally give some ``kernel''-like construction of the structure on $\DD_e$.

  For $x:A$, there is a translation $t_x:A\simeq A$, since $\Im$ preserves equivalences, this yields a $\Im t_x: \Im A\simeq \Im A$.
  By $\Im$-elimination, this extends to a family of translations
  \[ t':\prod_{y:\Im A}\Im A\simeq \Im A \text{, with $t'_{\iota_A(x)}=\Im t_x$.} \]
  Let $e':\equiv\iota_A(e)$, then $\Im A$ is homogeneous if we can produce a
  \[ p':\prod_{y:\Im A}t'_y(e')=y. \]
  By $\Im$-eliminating on $y$, we reduce the problem to
  \[
    \prod_{x:A}t'_{\iota_A(x)}(\iota_A(e))=\iota_A(x)
  \]
  By definition, the left hand side is $\Im (t_x)(\iota_A(e))$
  and by naturality of $\iota$, we have an equality $\Im (t_x)(\iota_A(e))=\iota_A(t_x(e))$.
  So by applying $\iota_A$ to the equality $p_x:t_x(e)=x$, we get a solution.

  We start to construct the homogeneous structure on $\DD_e$ by letting $e'':\equiv (e,\refl)$ be the unit.
  For the translations, we look at the dependent type $(x:A)\mapsto \iota_A(e)=\iota_A(x)$ and establish the following chain of equivalences for $y:A$ with $\iota_A(e)=\iota_A(y)$:
  \begin{align*}
           &\ \iota_A(e)=\iota_A(x) \\
    \simeq &\ t'_{\iota_A(y)}\iota_A(e)=t'_{\iota_A(y)}\iota_A(x) \\
    \simeq &\ t'_{\iota_A(y)}\iota_A(e)=\iota_A(t_y(x))  \\
    \simeq &\ \iota_A(y)=\iota_A(t_y(x)) \\
    \simeq &\ \iota_A(e)=\iota_A(t_y(x))
  \end{align*}
  The resulting equivalence, is an equivalence over $t_y$.
  So by \ref{sum-equivalence-over} this induces an equivalence on the sum, which is $\DD_e$.

  This construction yields a family of equivalences $t'':\prod_{y:\DD_e}\DD_e\simeq \DD_e$.
  To finish the prove of the theorem, we need to construct a family of equalities $\prod_{x:\DD_e}t''_x(e'')=x$.
  This is another computation using the same methods we have seen so far and we refer to the \href{https://github.com/felixwellen/DCHoTT-Agda/blob/ca8c755af0b26f8f50c5a60d3b7f9384a26f5d0e/ImHomogeneousType.agda#L237-L241}{formalization}\footnote{\href{https://github.com/felixwellen/DCHoTT-Agda/blob/master/ImHomogeneousType.agda}{https://github.com/felixwellen/DCHoTT-Agda/blob/master/ImHomogeneousType.agda}} instead of giving the details here.
\end{proof}

\subsection{Formally étale maps}
\label{section-etale-maps}

The approach to formally étale maps presented here has been developed further
in the ongoing synthetic algebraic geometry project\footnote{\href{https://github.com/felixwellen/synthetic-zariski/blob/main/README.md}{https://github.com/felixwellen/synthetic-zariski/blob/main/README.md}}.

In algebraic geometry, formally étale maps are supposed to be analogous to local diffeomorphisms in differential geometry.
Below, we will give a not so well known definition which matches the notion of algebraic geometry for finitely presented morphisms of schemes 
\footnote{This is in the appendix of \cite{cherubini_rijke_2021}} 
and coincides with the local diffeomorphisms between manifolds in the case of differential geometry
\footnote{See \cite[Proposition 3.2]{SyntheticPDEs} for a precise statement in an intended model.}.

\begin{definition}
  \label{formally-etale}
  A map $f\inT A\to B$ is \notion{formally étale}, if its naturality square is a pullback:
  \begin{center}
    \begin{tikzcd}
      A\arrow[r, "\iota_A"]\arrow[d, "f", swap] & \Im A\arrow[d, "\Im f"] \\
      B\arrow[r, "\iota_B", swap] & \Im B
    \end{tikzcd}
  \end{center}
\end{definition}

To see why this definition expresses that a map is an isomorphism on a infinitesimal scale,
we can look at the following situation:
  \begin{center}
    \begin{tikzcd}
      1\ar[r]\ar[d] &A\arrow[r, "\iota_A"]\arrow[d, "f", swap] & \Im A\arrow[d, "\Im f"] \\
      \DD_b\ar[r]\ar[ru,dashed]   &B\arrow[r, "\iota_B", swap] & \Im B
    \end{tikzcd}
  \end{center}
 -- whenever we have a point $b:B$ which we can lift to $A$, there will be a unique lift of the whole formal disk around $b$ to $A$ by the universal property of the pullback $A$ and the naturality of $\Im$. 

This definition of formally étale maps was used extensively in \cite{SchreiberDcct} and
\cite{SyntheticPDEs}\footnote{One of the authors of these references, Schreiber, learned the definition from notes of Kontsevich and Rosenberg on Q-categories and communicated it to the author of this article.}.
The same definition under different names was also used in category theory to study the relation between reflective subcategories and factorization systems \cite{cassidy_herbert_kelly_1985}.
Here, the maps with a cartesian naturality square for the reflector, are the right maps of a factorization system, where the left maps are those mapped to isomorphisms by the reflector. The factorization system can also be defined for a modality and studied internally \cite{cherubini_rijke_2021}.

We will continue with some basic observations:

\begin{lemma}
  \label{lemma-basic-etale-properties}
  \begin{enumerate}
  \item If $f\colon A\to B$ and $g\colon B\to C$ are formally étale, their composition $g\circ f$ is formally étale.
    If the composition $g\circ f$ and $g$ are formally étale, then $f$ is formally étale.
  \item Equivalences are formally étale.
  \item Maps between $\Im$-modal types are formally étale.
  \item All fibers of a formally étale map are $\Im$-modal.
  \end{enumerate}
\end{lemma}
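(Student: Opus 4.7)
The plan is to treat the four parts in turn, using three tools: the pasting law for pullbacks, functoriality of $\Im$ applied to the naturality square from \ref{definition-containing-naturality}, and the closure properties of coreduced types from Fact \ref{lemma:retracts-pi-sigma}.

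For (a), I would write the naturality square of the composite $g\circ f$ as the horizontal pasting of the naturality squares of $f$ and $g$, using the equality $\Im(g\circ f) = \Im g\circ \Im f$ that comes from the uniqueness part of the universal property of $\Im$-elimination. The forward direction (composition) is then the standard fact that a horizontal pasting of two pullback squares is a pullback; the left-cancellation statement is the converse direction of the same pasting lemma.

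For (b), if $f$ is an equivalence then so is $\Im f$, since $\Im$ preserves identities and composition up to equality and hence carries a two-sided inverse of $f$ to one for $\Im f$. The naturality square of $f$ then has equivalences on both horizontal sides, which makes it automatically a pullback. Part (c) is an immediate specialization: the hypothesis that $A$ and $B$ are coreduced means precisely that $\iota_A$ and $\iota_B$ are equivalences, and a square whose horizontal sides are equivalences is a pullback regardless of $f$.

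Part (d) is the new content. Fix $b: B$; since the naturality square of $f$ is a pullback, Lemma \ref{rem-pullback-fiberwise-equiv} gives, over $\iota_B$, an equivalence between the fibers of the two vertical maps, so in particular
\[ \mathrm{fib}_f(b) \simeq \mathrm{fib}_{\Im f}(\iota_B(b)). \]
The right-hand side unfolds to $\sum_{x:\Im A} \Im f(x) = \iota_B(b)$, a dependent sum over the coreduced type $\Im A$ whose summands are identity types in the coreduced type $\Im B$, and so are coreduced by Fact \ref{lemma:retracts-pi-sigma}(d). Hence the right-hand side is coreduced by (b) of that fact, and closure under retracts/equivalences (c) transports this along the equivalence to $\mathrm{fib}_f(b)$.

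The main obstacle is really only in part (a): one has to identify the naturality 2-cell $\eta_{g\circ f}$ with the pasted composite of $\eta_f$ and $\Im f \cdot \eta_g$, so that the outer rectangle of the two pasted naturality squares actually is the naturality square of $g\circ f$. This equality follows from uniqueness in $\Im$-elimination (both 2-cells witness the same factorization property), but it is the sort of coherence bookkeeping that deserves explicit attention if one formalizes the argument.
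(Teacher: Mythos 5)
Your proof is correct and follows essentially the same route as the paper: pullback pasting for (a), a commutative square with a pair of parallel equivalences for (b) and (c), and the fibered equivalence over $\iota_B$ combined with the closure properties of coreduced types for (d). (One small slip: in (b) the sides that become equivalences are the vertical ones, $f$ and $\Im f$, not the horizontal units $\iota_A$, $\iota_B$ --- your preceding sentence makes clear this is what you intend, and the argument is unaffected.)
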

\begin{proof}
  \begin{enumerate}
  \item By pullback pasting.
  \item The naturality square for an equivalence is a commutative square with equivalences on opposite sides. 
    Those squares are always pullback squares.
  \item This is, again, a square with equivalences on opposite sides.
  \item The pullback square witnessing $f\colon A\to B$ being formally étale yields
    an equivalence over $\iota_B$. 
    So, each fiber of $f$ is equivalent to some fiber of $\Im f$.
    But fibers of maps between $\Im$-modal types are always $\Im$-modal by \ref{lemma:retracts-pi-sigma} (c),
    hence each fiber of $f$ is equivalent to a $\Im$-modal type, thus itself $\Im$-modal.
  \end{enumerate}
\end{proof}

Together with the following, we have all the properties of formally étale maps needed in this article:

\begin{lemma}
  \label{lemma-etale-pullback-square}
  Let $f\inT A\to B$ be formally étale, then the following is true:
  \begin{enumerate}
  \item For all $x\inT A$, the differential $df_x$ is an equivalence.
  \item There is a pullback square of the following form:
    \begin{center}
      \begin{tikzcd}
        T_\infty A\arrow[r]\arrow[d] & T_\infty B\arrow[d]\drawpb{dl} \\
        A\arrow[r, "f", swap] & B
      \end{tikzcd}
    \end{center}
  \end{enumerate}
\end{lemma}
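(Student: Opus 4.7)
The plan is to deduce both parts from Lemma \ref{rem-pullback-fiberwise-equiv}, beginning with (a) and then using it to get (b). The formally étale hypothesis gives us the naturality square as a pullback, and (a) is essentially the statement that this pullback, read fiberwise on its horizontal maps instead of its vertical ones, produces a fibered equivalence on formal disks.

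For (a), I would regard the formally étale pullback
\begin{center}
  \begin{tikzcd}
    A\arrow[r,"\iota_A"]\arrow[d,"f",swap] & \Im A\arrow[d,"\Im f"] \\
    B\arrow[r,"\iota_B",swap] & \Im B
  \end{tikzcd}
\end{center}
as a square whose \emph{vertical} maps are $\iota_A$ and $\iota_B$ (a cosmetic transposition). Lemma \ref{rem-pullback-fiberwise-equiv} then says that for each $y:\Im A$ the induced fibered map from the fiber of $\iota_A$ at $y$ to the fiber of $\iota_B$ at $\Im f(y)$ is an equivalence. Specialize to $y\defequal \iota_A(x)$; the fiber on the left is by definition $\DD_x$, and using the naturality 2-cell $\eta_f(x):\Im f(\iota_A(x))=\iota_B(f(x))$ the fiber on the right is identified with $\DD_{f(x)}$. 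Unfolding the induced map on fibers, a point $(y,p):\sum_{y:A}\iota_A(y)=\iota_A(x)$ is sent to $(f(y),\eta_f(y)^{-1}\bu\Im f(p)\bu\eta_f(x))$, which is exactly the formula defining $df_x$ in the remark following \ref{def:formal-disk}. Hence $df_x$ is an equivalence.

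For (b), by the first description in Definition \ref{def-formal-disk-bundle}, $T_\infty A\equiv\sum_{x:A}\DD_x$ and $T_\infty B\equiv\sum_{y:B}\DD_y$, while $T_\infty f$ is, by its very definition, the sum of $df$ along $f$. Consequently the square in the lemma is already in the canonical shape treated by Lemma \ref{rem-pullback-fiberwise-equiv}, associated to the fibered morphism $df:\prod_{x:A}\DD_x\to\DD_{f(x)}$ over $f$. Part (a) says $df$ is a fibered equivalence, so Lemma \ref{rem-pullback-fiberwise-equiv} immediately delivers the desired pullback.

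The main obstacle is the book-keeping step in (a): verifying that the equivalence one obtains from the abstract pullback property genuinely coincides with the concretely defined differential $df_x$ rather than some other equivalence $\DD_x\simeq\DD_{f(x)}$. This is a matter of tracing the naturality 2-cell $\eta_f$ through the universal property of the pullback; if one prefers to avoid the direct calculation, one can note that both the abstract map and $df_x$ solve the same factorization problem out of $\DD_x$ against the outer pullback rectangle
\begin{center}
  \begin{tikzcd}
    \DD_x\arrow[r]\arrow[d] & A\arrow[r,"\iota_A"]\arrow[d,"f",swap] & \Im A\arrow[d,"\Im f"] \\
    1\arrow[r,"\iota_B(f(x))",swap] & B\arrow[r,"\iota_B",swap] & \Im B
  \end{tikzcd}
\end{center}
(as in the diagram used in the proof of the chain rule \ref{chain-rule}), and so are equal by uniqueness.
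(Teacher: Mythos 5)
Your proposal is correct and follows essentially the same route as the paper: part (a) is obtained by reading the formally étale pullback square fiberwise along its horizontal maps $\iota_A,\iota_B$ (whose fibers are the formal disks) via Lemma \ref{rem-pullback-fiberwise-equiv}, and part (b) is the observation that the square on formal disk bundles is exactly the canonical square associated to the fibered morphism $df$, hence a pullback by (a) and the same lemma. The paper compresses the identification of the induced fiber map with $df_x$ into a single sentence, whereas you spell out the book-keeping; that extra care is welcome but not a different argument.
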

\begin{proof}
  \begin{enumerate}
  \item The pullback square witnessing that $f$ is formally étale can be reformulated as: \\
    For all $x\inT \Im A$, the induced map between the fibers of $\iota_A$ and $\iota_B$ is an equivalence. 
    But these fibers are just the formal disks, so this can be applied to any $\iota_A(y)$ to see that $df_y$ is an equivalence.
  \item This is just a reformulation.
  \end{enumerate}
\end{proof}

One might wonder if the converse of this statement holds.
With a mild condition on $A$ which is related to the concept of formal smoothness in algebraic geometry,
this is the case\footnote{Formulation and proof of this remark are a result of a discussion with Hugo Moeneclaey.}:

\begin{remark}
  \label{formally-etale-by-differential}
  Let $A$ be a type such that $\iota_A:A\to\Im A$ is surjective and $f: A\to B$ any map.
  Then $f$ is formally étale, if $df_x$ is an equivalence for all $x\inT A$
\end{remark}
\begin{proof}
  As in the lemma, we use the equivalence of pullback squares and fibered equivalences.
  So to show that the $\iota$-naturality square for $f$ is a pullback,
  we have to show that for all $x\inT \Im A$ the induced map on fibers
  \[
    \psi_x:\iota_A^{-1}(x) \to \iota_B^{-1}((\Im f)(x))
  \]
  is an equivalence.

  By surjectivity of $\iota_A$, there merely is a $\tilde{x}$ and $p:\iota_A(\tilde{x})=x$.
  Since we show a proposition, we can use $\tilde{x}$ and the equivalence $e_1:\DD_{\tilde{x}}\simeq \iota_A^{-1}(x)$.
  By naturality we also have $e_2:\DD_{f(\tilde{x})}\simeq \iota_B^{-1}((\Im f)(x))$.
  
  It remains to show that $\psi_x=e_2\circ df_{ \tilde{x}}\circ e_1^{-1}$.
  Induction on $p$ simplifies $e_1$ and $e_2$ to the identity
  and we just have to note that $df_{ \tilde{x}}$ was defined as a induced map on the fibers $\DD_{ \tilde{x}}$ and $\DD_{f(\tilde{x})}$.
\end{proof}

The following is also proven in a different way in \cite{cherubini_rijke_2021} as corollary 5.2 (b).

\begin{theorem}
  \label{etale-is-pullback-stable}
  Let $f:A\to B$ be formally étale and
  \begin{center}
    \begin{tikzcd}
      A'\arrow[r]\arrow[d, "f'", swap] & A\arrow[d, "f"]\arrow[d]\drawpb{dl} \\
      B'\arrow[r] & B
    \end{tikzcd}
  \end{center}
  a pullback square. Then $f'$ is formally étale.
\end{theorem}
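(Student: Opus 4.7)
The plan is to verify the defining pullback square for $f'$ being formally étale by computing $\Im A'$ explicitly, using the fact that the formally étale hypothesis on $f$ gives a very concrete description of $A$, and hence of $A'$, to which Fact~\ref{lemma-compute-modal-sum} applies.

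First I would substitute $A \simeq B \times_{\Im B} \Im A$ (which is exactly the formally étale hypothesis on $f$) into the given pullback. By pullback pasting,
\[ A' \simeq A \times_B B' \simeq B' \times_{\Im B} \Im A. \]
Letting $h \inT B' \to B$ denote the bottom map and using the naturality equality $\iota_B \circ h = \Im h \circ \iota_{B'}$, this can be rewritten as
\[ A' \simeq \sum_{b' \inT B'} P(\iota_{B'}(b')), \qquad \text{where } P(y) \defequal \mathrm{fib}_{\Im f}(\Im h(y)). \]
The fibers $P(y)$ are coreduced by Fact~\ref{lemma:retracts-pi-sigma}, since they are built as sums of identity types over coreduced types ($\Im A$ and $\Im B$).

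Next I would apply Fact~\ref{lemma-compute-modal-sum} to compute
\[ \Im A' \simeq \sum_{y \inT \Im B'} \Im P(y) \simeq \sum_{y \inT \Im B'} P(y) \simeq \Im B' \times_{\Im B} \Im A, \]
where the middle equivalence uses that $P$ is coreduced, so $\Im P(y) \simeq P(y)$. A final application of pullback pasting then yields
\[ B' \times_{\Im B'} \Im A' \simeq B' \times_{\Im B'} (\Im B' \times_{\Im B} \Im A) \simeq B' \times_{\Im B} \Im A \simeq A', \]
which is exactly the statement that the naturality square for $f'$ is a pullback, i.e.\ that $f'$ is formally étale.

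The main obstacle I expect is bookkeeping: one has to check that the equivalences assemble into the comparison map actually induced by the naturality square, and in particular that the computed equivalence $\Im A' \simeq \Im B' \times_{\Im B} \Im A$ identifies $\Im f'$ with the first projection. This should fall out of the naturality of $\iota$ together with the explicit form of the inverse to $\_\circ\iota_A$ from the definition of the modality, but warrants care when recording the 2-cells. Everything else is routine application of pullback pasting (as already used in \ref{lemma-basic-etale-properties}) and of the closure properties of coreduced types under $\Sigma$ and identity types.
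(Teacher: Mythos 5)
Your proposal is correct and follows essentially the same route as the paper's proof: both describe $A'$ as a sum over $B'$ of fibers of $\Im f$ precomposed with $\Im\psi\circ\iota_{B'}$, apply Fact~\ref{lemma-compute-modal-sum} (together with coreducedness of those fibers) to identify $\Im A'$ with the pullback of $\Im A$ along $\Im\psi$, and finish by pullback pasting. The only difference is that you spell out the coreducedness of the fibers and the final pasting step slightly more explicitly than the paper does.
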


\begin{proof}
  Let us denote the bottom map with $\psi:B'\to B$.
  We start by describing $A'$ as a pullback:
  \begin{align*}
    A' &\simeq \left(\sum f'^{-1}\right)
    \simeq \left(\sum f^{-1}\circ\psi\right)
    \simeq \left(\sum (\Im f)^{-1}\circ\iota_B\circ\psi\right) \\
    & \simeq \left(\sum (\Im f)^{-1}\circ\Im\psi\circ\iota_{B'}\right)
  \end{align*}
  Now we can apply \ref{lemma-compute-modal-sum} to compute $\Im A'$:
  \[ \Im A'\simeq \Im\left(\sum (\Im f)^{-1}\circ\Im\psi\circ\iota_{B'}\right)
    \simeq\left(\sum (\Im f)^{-1}\circ\Im\psi\right) \]
  Note that the right hand side is the pullback of $\Im A$ along $\Im\psi$.
  This means that applying $\Im$ to the pullback square given in the statement of the theorem, is again a pullback and by pullback pasting
  the naturality square of $f'$ is a pullback.
\end{proof}

\begin{corollary}
  \begin{enumerate}
  \item Let $X$ be a type and $x:X$.
    The inclusion $\iota_x:\DD_x\to X$ of the formal disk at $x$ is a formally étale map.
  \item Any pullback of a map between $\Im$-modal types is formally étale.
  \end{enumerate}
\end{corollary}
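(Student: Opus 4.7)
The plan is to prove part (b) first and then deduce part (a) as a direct instance, because (a) is essentially the pullback-defining square of a formal disk applied to (b).

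For (b), let $f : A \to B$ be a map between coreduced types, and consider any pullback square
\begin{center}
\begin{tikzcd}
A' \arrow[r] \arrow[d, "f'", swap] & A \arrow[d, "f"] \drawpb{dl} \\
B' \arrow[r] & B
\end{tikzcd}
\end{center}
By Lemma \ref{lemma-basic-etale-properties}(c), the map $f$ is formally étale because both $A$ and $B$ are coreduced. Then Theorem \ref{etale-is-pullback-stable} applies and yields that $f'$ is formally étale. This is essentially a one-line argument once the two preceding results are combined.

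For (a), I would use the third characterization of the formal disk in Definition \ref{def:formal-disk}, which presents $\DD_x$ as the pullback
\begin{center}
\begin{tikzcd}
\DD_x \arrow[r] \arrow[d, "\iota_x", swap] & 1 \arrow[d, "\ast\mapsto\iota_X(x)"] \drawpb{dl} \\
X \arrow[r, "\iota_X", swap] & \Im X
\end{tikzcd}
\end{center}
The right-hand vertical map $1 \to \Im X$ has domain $1$, which is coreduced (since $\Im 1 \simeq 1$ as noted right after Fact \ref{lemma:retracts-pi-sigma}), and codomain $\Im X$, which is coreduced by construction. Hence it is a map between modal types, so by part (b) applied to this pullback square, the left-hand map $\iota_x : \DD_x \to X$ is formally étale.

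I do not anticipate any real obstacle here: both statements reduce to already-established facts, namely that morphisms between coreduced types are formally étale (Lemma \ref{lemma-basic-etale-properties}(c)), that formally étale maps are stable under pullback (Theorem \ref{etale-is-pullback-stable}), and the pullback presentation of $\DD_x$. The only thing to keep track of is that the point $\ast \mapsto \iota_X(x)$ really is a map whose domain and codomain are coreduced, but this is immediate.
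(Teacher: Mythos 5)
Your proof is correct and follows exactly the paper's own argument: part (b) is the combination of Lemma \ref{lemma-basic-etale-properties}(c) with Theorem \ref{etale-is-pullback-stable}, and part (a) is the special case obtained from the pullback presentation of $\DD_x$ along the map $\iota_X(x):1\to\Im X$ between coreduced types. Nothing is missing.
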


\begin{proof}
  All maps between $\Im$-modal types are formally étale.
  Hence the second statement follows from the theorem and the first follows as the special case for the map $\iota_X(x):1\to \Im X$.
\end{proof}

There is much more to be said about formally étale maps that is very useful, but not used in this article.
One example which is interesting from a geometric perspective is that formally étale maps are the right class of a factorization system,
whose left class are the $\Im$-equivalences.
A consequence is that all maps can be factored into an $\Im$-equivalence followed by a formally étale map:

\begin{remark}
  Let $f:A\to B$ be a map.
  The map $f$ factors over
  \[
    C_f:\equiv \sum_{x\inT \Im A, y\inT B}(\Im f)(x)=\iota_B(y)
  \]
  by $l_f:\equiv (u:A)\mapsto (\iota_A(u),f(u),\eta_f)$ and $r_f:\equiv ((x,y,p)\mapsto y) $ to $B$.
  Furthermore, $\Im(l_f)$ is an equivalence and $r_f$ is formally étale.
\end{remark}

We sketch a proof -- a full analysis of the factorization system can be found in \cite[section 7]{cherubini_rijke_2021}.

\begin{proof}
  Applying \Cref{lemma-compute-modal-sum} twice on $C_f$ shows that $\Im(l_f)$ is an equivalence.
  And $r_f$ is formally étale, since it is the pullback of the formally étale $\Im f$ along $\iota_B$.
\end{proof}

We will put formally étale maps to use in section \ref{manifolds}.
The next section makes no reference to $\Im$.

\section[Structures on V-manifolds]{Structures on $V$-manifolds}
\subsection[Fiber bundles]{Fiber bundles}
\label{section-fiber-bundles}

As mentioned in the introduction, the spaces we have in mind might have both, differential geometric structure and higher identity types.
This section is about maps that correspond to fiber bundles which are by definition locally trivial with respect to the higher identity or homotopical structure,
but are expected to be locally trivial also with respect to a geometric structure which might be present in an application.
By local triviality, we just mean that there is a
surjection\footnote{Internal surjections correspond to effective epimorphisms in a topos, so in a topos of sheaves, the topology \emph{does} play a role for the internal surjections. }
into the base of the fiber bundle $p:E\to B$ such that pulling back along the surjection yields a projection from a product with a fixed given type $F$.
It will turn out to be logically equivalent to ask all fibers of $p$ to be merely equivalent to $F$.

In a basic intended applications $E$ and $B$ might just be manifolds given by 0-types and the reader might wonder if this notion of fiber bundle is too unrestrictive.
However, it turns out that asking this \emph{internally} turns into a surprisingly strong statement \emph{externally}.
In \cite{sag} it was discovered for a model based on a Grothendieck topos relevant to algebraic geometry,
that internal surjections have local sections with respect to the Grothendieck topology.
It is reasonable to assume that similar principles work in differential geometry and applying this to the surjective projection
\[ \left(\sum_{x:B}\|p^{-1}(x)=F\|\right) \to B \]
would show that the fiber bundles defined in this section are actually locally trivial in the sense of classical definitions of fiber bundles.
A $\infty$-topos-theoretic version of this approach to fiber bundles may be found in \cite{NikolausSchreiberStevenson}.

In this section, we will give four definitions of these fiber bundles\footnote{Some of the following definitions of fiber bundles were also used early in the short history of homotopy type theory at least by Mike Shulman, Ulrik Buchholtz and Egbert Rijke.} and prove that they are equivalent.
It will be useful in Section \ref{G-jet-structures} to switch between the different definitions.

For the following statements about fiber bundles, we will make a lot of unavoidable use of a univalent universe $\mathcal U$
and propositional truncation.
We will frequently use that all maps of types $p:E\to B$ appear in a pullback square
\begin{center}
  \begin{tikzcd}
    E\arrow[d, "p", swap]\arrow[r] & \widetilde{\mathcal U}\arrow[d]\drawpb{dl} \\
    B\arrow[r, "p^{-1}", swap] & \mathcal U,
  \end{tikzcd}
\end{center}
where $\widetilde{\mathcal U}$ is called the \notion{universal family} and obtained by summing over the dependent type $(A\inT \mathcal U)\mapsto A$.
The bottom map $p^{-1}$ determines $p$ up to canonical equivalence over $B$ and is called the \notion{classifying map} of $p$. 
If $E$ is a sum over a dependent type $q\inT B\to \mathcal U$, and $p$ the projection to $B$, then $q$ is the classifying map.

This way of using a univalent universe corresponds to looking at it as a \notion{moduli space} or \notion{classifying space} of types.
We could replace the $\mathcal U$ with some other moduli space to get specialized notions of fiber-bundles
with additional structure on the fibers. 

Before we start, we will look at some preliminaries about surjective and injective maps.
A surjective map is a map with merely inhabited fibers, or in other words a $\|\_\|$-connected map.
An injective map has $\|\_\|$-truncated fibers.
\footnote{Note that in a sheaf-topos, this notion corresponds to epimorphisms and not to a pointwise surjective map. 
  In \cite[chapter 7]{UFP}, surjective maps are called $(-1)$-connected or also surjective, if their domain and codomain are $0$-types. 
  Topos theoretic analogs are defined in \cite[6.5.1.10, 5.5.6.8]{Lurie} and are called 0-connective and (-1)-truncated.
In the terminology of \cite{SchreiberDcct} or \cite{nLab} and in \cite{wellen-thesis} surjective maps would be 1-epimorphisms and injective maps 1-monomorphisms.}

\begin{definition}
  Let $f:A\to B$ be a map of types.
  \begin{enumerate}
  \item The map $f$ is \notion{surjective} if
    \[ \prod_{b\inT B} \left(\| f^{-1}(b) \|\simeq 1\right)\text{.} \]
    We write $f: A\epi B$ in this case.
  \item The map $f$ is \notion{injective} if
    \[ \prod_{b\inT B} \left(f^{-1}(b) \text{ is a proposition}\right)\text{.} \]
    We write $f: A\mono B$ in this case.
  \end{enumerate}
\end{definition}

\begin{lemma}
  \label{surjective-pullback-stable}
  Surjective and injective maps are preserved by pullbacks.
\end{lemma}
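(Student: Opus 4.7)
The plan is to observe that both properties are defined fiberwise and that fibers are preserved under pullback, so the result reduces to a direct unfolding of definitions.

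Concretely, given a pullback square
\begin{center}
\begin{tikzcd}
A' \arrow[r] \arrow[d, "f'", swap] & A \arrow[d, "f"] \drawpb{dl} \\
B' \arrow[r, "g", swap] & B
\end{tikzcd}
\end{center}
I would first recall (or quickly re-derive from the fibered-equivalence characterization in lemma \ref{rem-pullback-fiberwise-equiv}) that for every $b' \inT B'$ the fiber of $f'$ over $b'$ is canonically equivalent to the fiber of $f$ over $g(b')$; in sum-type language we can present $A'$ as $\sum_{b':B'} f^{-1}(g(b'))$, and then $f'$ is the first projection, whose fiber at $b'$ is literally $f^{-1}(g(b'))$.

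For the surjective case, if $f$ is surjective then $\|f^{-1}(g(b'))\|_{-1} \simeq 1$ for every $b' \inT B'$, and since the fiber of $f'$ at $b'$ equals this type, $f'$ is surjective. For the injective case, if $f$ is injective then each $f^{-1}(g(b'))$ is $(-1)$-truncated, hence each fiber of $f'$ is $(-1)$-truncated, so $f'$ is injective.

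There is essentially no obstacle here: the only thing to be careful about is presenting the "fiber of a pullback is the fiber of the original" fact cleanly, which follows either from pullback pasting (applied to the fiber over a point factoring as two pullback squares) or from the sum-type presentation of the pullback above. Since the definitions of surjectivity and injectivity are pointwise conditions on fibers, and both conditions are invariant under equivalence of types, the conclusion is immediate for each $b' \inT B'$.
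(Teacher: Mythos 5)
Your proposal is correct and matches the paper's own argument, which is precisely to pass from the pullback square to the fibered equivalence of \ref{rem-pullback-fiberwise-equiv} and then use that surjectivity and injectivity are fiberwise conditions invariant under equivalence. No gaps; your writeup just spells out what the paper leaves as "immediate."
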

\begin{proof}
  This is immediate by passing from pullback squares to fibered equivalences.
\end{proof}

\begin{examples}
  \label{example-equivalences-are-epi}
  \begin{enumerate}
  \item Let $f: A\to B$ be an equivalence of types. 
    Then $f$ is surjective and injective since all fibers of $f$ are contractible.
  \item Let $P: A\to \mathcal U$ be a proposition. Then the projection
    \[ \pi_1 : \sum_{a\inT A} P(a) \to A \]
    is injective.
  \item For the higher inductive type $S^1$, 
    the inclusion of the base point is a surjection.
  \end{enumerate}
\end{examples}

\begin{lemma}
  \label{lemma:epi-mono-factorization}
  For any map $f: A\to B$ there is a unique triangle:
  \begin{center}
    \begin{tikzcd}
      A\arrow[rr, "f"]\arrow[dr, twoheadrightarrow, "e", swap] & & B \\
      & \image(f)\arrow[ur, hookrightarrow, "m", swap] & 
    \end{tikzcd}
  \end{center}
  where $e$ is surjective, $m$ injective and $\image(f)$ is given by 
  \[\image(f)\defequal \sum_{b:B} \left\| \sum_{a\inT A} f(a)=b  \right\|\text{.} \]
\end{lemma}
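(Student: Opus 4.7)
The plan is to exhibit the factorization explicitly and then to establish uniqueness via the universal property of propositional truncation. Define $e: A \to \image(f)$ by $a \mapsto (f(a), |(a,\refl)|)$ and $m: \image(f) \to B$ by the first projection. Commutativity $m \circ e = f$ is immediate. The fiber of $m$ at $b \inT B$ is, by contraction of the singleton on the first component, equivalent to $\|\sum_{a \inT A} f(a) = b\|_{-1}$, a proposition, so $m$ is injective. The fiber of $e$ at $(b,p)$ is computed using the characterization of equality in a $\Sigma$-type: since the second component of $\image(f)$ lands in a proposition, this fiber is equivalent to $\sum_{a \inT A} f(a) = b$, whose propositional truncation is inhabited by $p$ itself and therefore contractible; so $e$ is surjective.

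For uniqueness, suppose $A \stackrel{e'}{\twoheadrightarrow} X \stackrel{m'}{\hookrightarrow} B$ is any other such factorization. I construct mutually inverse equivalences between $\image(f)$ and $X$ over $B$. In one direction, given $(b, p) \inT \image(f)$, the fiber $m'^{-1}(b)$ is a proposition by injectivity of $m'$; each $(a,q) \inT f^{-1}(b)$ yields the element $(e'(a),\_) \inT m'^{-1}(b)$ coming from $m'(e'(a)) = f(a) = b$, so the universal property of $\|-\|_{-1}$ extends this to a map $\|\,f^{-1}(b)\,\|_{-1} \to m'^{-1}(b)$; applying it to $p$ and projecting gives $\varphi(b,p) \inT X$. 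In the reverse direction, for $x \inT X$ surjectivity of $e'$ makes $\|\,e'^{-1}(x)\,\|_{-1}$ contractible; any representative $(a,q)$ gives $f(a) = m'(e'(a)) = m'(x)$, hence an inhabitant of the proposition $\|\,f^{-1}(m'(x))\,\|_{-1}$ that is independent of the chosen representative, yielding $\psi(x) \defequal (m'(x), \_)$. That $\varphi$ and $\psi$ commute with $m$ and $m'$ is by construction; the round-trip compositions are then forced to equal the identity because $m$ and $m'$ are injective and the round-trips preserve the first component.

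The main obstacle is bookkeeping rather than substance: virtually every step moves between untruncated fibers, their $(-1)$-truncations, and equalities in $\Sigma$-types, and one must repeatedly verify that the target of a given elimination is a proposition so that the truncation recursor applies. Uniqueness of the full triangle — that is, contractibility of the type of all surjection-injection factorizations of $f$ — then follows from the equivalence just constructed, since once the middle object is pinned down up to equivalence over $B$ the remaining data ($e'$, $m'$, and their commuting square) are forced up to contractible choice.
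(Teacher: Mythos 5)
The paper gives no proof of its own here; it simply cites \cite[Chapter 7.6]{UFP}, and your argument is exactly the $n=-1$ instance of the proof found there: the explicit $(e,m)$ with fiberwise verification of surjectivity and injectivity, and uniqueness via truncation-recursion into the propositional fibers of the competing factorization. Your construction is correct; the only place you compress things is the final claim that contractibility of the whole type of factorizations follows once the middle objects are identified over $B$ --- to be fully rigorous one must also check that the constructed equivalence commutes with $e$ and $e'$ (not just with $m$ and $m'$) and that the commuting 2-cells are determined, which is the content of the corresponding uniqueness theorem in \cite{UFP} and is indeed only bookkeeping of the kind you describe.
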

A proof of the general case for $\|\_\|_n$ may be found in \cite[chapter 7.6]{UFP}. 

In Topology, an $F$-fiber bundle is a map $p\colon E\to B$ that is locally trivial with all its fibers are isomorphic to $F$.
Local triviality means that $B$ may be covered by open sets $U_i$, 
such that on each $U_i$ the restricted map $p_{|p^{-1}(U_i)}$ is isomorphic to the projection $F\times U_i\to U_i$. 
We may rephrase this in a more economical way: 
From our cover, we construct a surjective map $w\colon\coprod_{i\in I}U_i\to B$. 
Then the local triviality translate to the pullback of $p$ along $w$ being isomorphic to the product projection $F\times\coprod_{i\in I}U_i\to\coprod_{i\in I}U_i$.

For fiber bundles in geometry, we would require more from a general surjective map, or \notion{cover}, $w\colon W\to B$ than that pulling back along it turns $p$ into a product projection.
However, for the notion we discuss in this section, this turns out to be already enough.
\begin{definition}
  \label{def-fiber-bundle}
  Let $p\colon E\to B$ be a map of types. 
  For another map $w\colon W \to B$ we say $w$ is a \notion{trivialising cover} for $p$ if $w$ is a surjective map and there is a pullback square:
  \begin{center}
    \begin{tikzcd}
      W\times F\arrow[r]\arrow[d, "\pi_1", swap] & E\arrow[d, "p"]\drawpb{dl} \\
      W\arrow[r, "w", swap] & B
    \end{tikzcd}
  \end{center}
  The map $p$ is called an $F$-\notion{fiber bundle} if there merely is such a trivializing $p$.
\end{definition}

We give an equivalent dependent version of this definition,\footnote{Following a suggestion from Max S.\ New \url{http://maxsnew.github.io/}.}
which will be a lot easier to work with:

\begin{definition}
  \label{def-fiber-bundle-dependent}
  Let $E\inT B\to\mathcal U$ be a dependent type. 
  We say that a surjection $w: W \to B$ is a \notion{trivialising cover} for $E$ if 
  \[ \prod_{x\inT W} E(w(x)) \simeq F\text{.} \]
  The dependent type $E$ is called an $F$-\notion{fiber bundle} if there merely is such a trivialising cover.
\end{definition}

We can switch between the two definitions in the usual way: 
Given an $F$-fiber bundle $p\inT E\to B$ in the first sense, the dependent type of its fibers $p^{-1}\inT B \to \mathcal U$ 
will be an $F$-fiber bundle in the second sense, by direct application of \ref{rem-pullback-fiberwise-equiv}. 
To go back, we take the projection from the sum of an $F$-fiber bundle $E\inT B\to \mathcal U$.

Note that in both cases, the propositional truncation of the trivializing datum is neccessary to turn the definition into a proposition.
In the following, we will see that we could have defined $F$-fiber bundles more easily with their classifying maps to a type called $\BAut(F)$, providing us with a notion of $F$-fiber bundles, which is directly a proposition.  
However, in those definitions, while it is possible to construct a surjective trivializing map, it is unclear how we may require that this map has additional properties. 
One example, where we are interested in special surjections, is the definition of a $V$-manifold, where we will use formally étale surjections. 

We review the type $\BAut(F)$ now, which will be used to give the alternative definition of fiber bundles mentioned above:
\begin{definition}
  Let $F$ be a type and $t_F: 1\to \mathcal{U}$ the map given by $\ast \mapsto F$.
  \begin{enumerate}
  \item Let $\BAut(F)\defequal \image(t_F)$.
  \item We also have the injection $v_{\mathrm{BAut(F)}}: \BAut(F)\to \mathcal{U}$.
  \item We use the notation $F\hquot\Aut(F):\equiv \sum_{(F',\left|\varphi\right|)\inT\BAut(F)}F'$
    which is justified by the general fact that dependent sums over a map $\rho:BG\to \mathcal U$ are the homotopy quotient of $\rho(*)$ by the action of loops in $BG$ via transport in $\rho$.
  \end{enumerate}
\end{definition}

\begin{remark}
The first projection $\pi: F\hquot\Aut(F)\to \BAut(F)$ is a pullback of $\widetilde{\mathcal U}\to \mathcal U$ along $v_{\mathrm{BAut(F)}}$.
The map $\pi\inT F\hquot\Aut(F)\to \BAut(F)$ is the universal $F$-fiber bundle,
meaning all $F$-fiber bundles with any base will turn out to be pullbacks of this map.  
\end{remark}

We are now ready to give yet another definition of fiber bundles:
\begin{definition}
  \label{definition-classifying-square-for-fiber-bundle}
  A map $p: E\to B$ is an \notion{$F$-fiber bundle}, if and only if there is a map $\chi\colon B\to \BAut(F)$, 
  such that there is a pullback square
  \begin{center}
    \begin{tikzcd}
      E\arrow[d, "p", swap]\arrow[r] & F\hquot\Aut(F)\arrow[d, "\pi"]\drawpb{dl} \\
      B\arrow[r, "\chi", swap] & \BAut(F).
    \end{tikzcd}
  \end{center}
  In this case, $\chi$ is called the \notion{classifying map} of $p$.
\end{definition}

This definition also has a surprisingly easy dependent variant, which is obviously a mere proposition:

\begin{definition}
  \label{all-fibers-merely-equal}
  Let $E\colon B\to\mathcal U$ be a dependent type. 
  We say $E$ is an \notion{$F$-fiber bundle}, if
  \[ \prod_{b\inT B}\| E(b)\simeq F\| \text{.}\]
\end{definition}

Again, we will switch between the dependent and non-dependent version by taking fibers of $p$ and the sum respectively.
To arrive at the dependent version, we can directly use the classifying morphism $\chi$ of an $F$-fiber bundle $p\inT E \to B$ to construct a term of 
\[ \prod_{b\inT B}\| p^{-1}(b)\simeq F\|, \]
since all points $\chi(b)\inT \BAut(F)$ are of the form $(F',\gamma)$, 
with $F'\simeq p^{-1}(b)$ by the pullback square and $\gamma$ a proof that $F'$ is merely equivalent to $F$. 

Now, for the converse, let 
\[ E\inT B\to \mathcal U \]
be an $F$-fiber bundle, by $t\inT \prod_{b\inT B}\| E(b)\simeq F\|$. 
Then the classifying map is given by $(x\inT B)\mapsto (E(b), t_x)$ and the pullback square is given by pasting:
\footnote{Note that the outer rectangle is a pullback for all dependent types. }
\begin{center}
  \begin{tikzcd}
    \sum E\arrow[r]\arrow[d, "\pi_1", swap]\arrow[rr, bend left=25] & F\hquot\Aut(F)\arrow[d, "\pi", swap]\arrow[r] & \widetilde{\mathcal U}\arrow[d]\drawpb{dl} \\
    B\arrow[r, "\chi", swap] & \BAut(F)\arrow[r] & \mathcal U.
  \end{tikzcd}
\end{center}
We will conclude this section by showing that all our definitions of fiber bundles are equivalent and discuss some examples.
The equivalence is most efficiently proven, by establishing the equivalence of the two dependent definitions first:
\begin{theorem}
  \label{thm-equivalence-of-fiber-bundle-definitions}
  Let $F$ be a type and $E\inT B\to\mathcal U$ be a dependent type, then
  \[ \prod_{b\inT B}\| E(b)\simeq F\| \]
  if and only if there is a type $W$ and a surjective $w\inT W\to B$ such that
  \[ \prod_{x\inT W} E(w(x))\simeq F.\]
\end{theorem}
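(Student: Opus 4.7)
The plan is to prove the two implications directly, using the dependent reformulations of both sides as this avoids the back-and-forth with pullback squares that the non-dependent version would require.

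For the forward direction, assume $\prod_{b\inT B}\|E(b)\simeq F\|_{-1}$. The natural choice for the trivializing cover is
\[ W\defequal \sum_{b\inT B}(E(b)\simeq F),\qquad w\defequal \pi_1. \]
Then for $x\equiv (b,\varphi)\inT W$ we have $E(w(x))\equiv E(b)$ with the witnessing equivalence $\varphi\inT E(w(x))\simeq F$ built into $x$ itself, giving the required section of $\prod_{x\inT W} E(w(x))\simeq F$. Surjectivity of $w$ amounts to the fact that the fiber $w^{-1}(b)$ is equivalent to $E(b)\simeq F$, whose propositional truncation is contractible by the assumption.

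For the reverse direction, assume we have $w\inT W\epi B$ and an equivalence $\varphi\inT \prod_{x\inT W}E(w(x))\simeq F$. Fix $b\inT B$; we want a term of the proposition $\|E(b)\simeq F\|_{-1}$. Since $w$ is surjective, $\|w^{-1}(b)\|_{-1}$ is contractible and in particular inhabited, so we may eliminate a representative $(x,q)\inT w^{-1}(b)$ into the proposition $\|E(b)\simeq F\|_{-1}$. From $q\inT w(x)=b$ we obtain an equivalence $E(w(x))\simeq E(b)$ by transport, which composed with $\varphi_x$ gives an equivalence $E(b)\simeq F$, whose image under the truncation unit is the desired term.

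I expect no serious obstacle here; the two directions are essentially definitional unfoldings once one picks $W\defequal\sum_{b\inT B}(E(b)\simeq F)$ in the forward direction. The only step that requires a little care is the reverse direction's use of propositional truncation elimination, which is justified precisely because the target $\|E(b)\simeq F\|_{-1}$ is a mere proposition. Neither direction needs univalence or the universe $\BAut(F)$; this is why the theorem is stated at the level of the two dependent definitions rather than going through the classifying map in \ref{definition-classifying-square-for-fiber-bundle}.
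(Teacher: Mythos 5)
Your proof is correct and follows essentially the same route as the paper: the forward direction uses the same canonical cover $W\defequal\sum_{b\inT B}(E(b)\simeq F)$ with first projection (the paper's Definition \ref{definition-canonical-cover}), and the reverse direction likewise eliminates the mere inhabitant of $w^{-1}(b)$ into the proposition $\|E(b)\simeq F\|_{-1}$ via transport along $w(x)=b$ composed with the given fiberwise equivalence. Nothing to add.
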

For the proof, we need to construct a trivializing cover at some point.\footnote{The author has to thank Ulrik Buchholtz for asking if such a cover always exists.}
The construction we use is similar to the universal cover and interesting on its own:
\begin{definition}
  \label{definition-canonical-cover}
  Let $E\inT B\to\mathcal U$ be an $F$-fiber bundle by $t\inT \prod_{b\inT B}\| E(b)\simeq F\|$, 
  then 
  \[ W\colon\equiv \sum_{b\inT B} E(b) \simeq F \]
  together with its projection to $B$ is the \notion{canonical trivializing cover} of $p$. 

  The given $t$ directly proves that this projection is surjective. 
  Let us denote this projection by $w\inT W\to B$, then for all $x\inT W$, with $x=(b,e)$ we have
  \[ E(w(x))\simeq E(\pi_1(b,e))\simeq F\]
  by transport and $e\inT E(b)\simeq F$ itself.
\end{definition}

\begin{proof}[of \ref{thm-equivalence-of-fiber-bundle-definitions}]
  With the definition and remark above, it remains to show the converse.
  Let $E\inT B\to\mathcal U$ and $w\inT W\to B$ such that $t\inT \prod_{x\inT W} E(w(x))\simeq F$.
  Now, for any $b\inT B$ and $x_b\inT w^{-1}(b)$, we get an equivalence $t_{\pi_1(x_b)}\inT E(w(\pi_1(x_b)))\simeq F$.
  By general properties of fibers, we have $w(\pi_1(x_b))=b$ yielding $E(b)\simeq F$.
  By surjectivity of $w$, we merely have a $x_b\inT w^{-1}(b)$ for any $b\inT B$, therefore we merely have an equivalence $E(b)\simeq F$.
\end{proof}

\begin{examples}
  \begin{enumerate}
  \item Let $A$ be a pointed connected type, then any $E\inT A\to\mathcal U$ is an $E(\ast)$-fiber bundle.\footnote{Thanks to Egbert Rijke for pointing this out.}
  \item The map $1\to S^1$ is a $\ZZ$-fiber bundle.
  \item More generally, for a pointed connected type $A$,
    projection from the homotopical universal cover $\sum_{x\inT A} x=\ast$ to $A$ is an $\Omega A$-fiber bundle
    and the projection from $\sum_{x\inT A}\| x=\ast \|_{1}$ to $A$ is a $\pi_1(A,\ast)$-fiber bundle.
  \item As $w\inT W\to B$ is a first projection, its fiber over any $b\inT B$ is equivalent to $E(b)\simeq F$.
    The latter type is merely equivalent to $\Aut(F)$, since $E(b)$ is merely equivalent to $F$.
    This means $w$ is an $\Aut(F)$-fiber bundle.
  \end{enumerate}
\end{examples}

\subsection[V-manifolds]{$V$-manifolds}
\label{manifolds}

A smooth $n$-manifold is a space that is locally diffeomorphic to $\RR^n$, hausdorff and second countable.
A detailed comparison between the notion of $V$-manifold, which will be introduced below, and other notions of manifold may be found in \cite{SyntheticPDEs}[3.3, 3.4] and \cite{david-orbifolds}[Section 5, p.\ 40 ff].

The definition of $V$-manifolds just mimics the property of being locally diffeomorphic to a fixed space,
which we will only require to be homogenous (as defined in \Cref{def-homogeneous-type}). 
A covering $(U_i)_{i\in I}$ with $U_i\simeq \RR^n$ of an $n$-manifold $M$ yields a surjective local diffeomorphism 
\[ \coprod_{i\in I}U_i \to M.\]
By projecting, there is also a local diffeomorphism from $U:\equiv \coprod_{i\in I}U_i$ to $\RR^n$.
So in total, we have a span of local diffeomorphisms where the right one is surjective.
  \begin{center}
    \begin{tikzcd}
      & U\arrow[dr, twoheadrightarrow]\arrow[dl, swap] & \\
      \RR^n & & M
    \end{tikzcd}
  \end{center}
In applications, more general vector spaces might take the role of $\RR^n$ -- so we will follow the literature and use the letter $V$ in the more abstract definition of a $V$-manifold below.
Instead of local diffeomorphisms we will use formally étale maps.
This is justified by the external calculation \cite{SyntheticPDEs}[Proposition 3.2] which shows that formally étale maps between two smooth manifolds are exactly the local diffeomorphisms.

\begin{definition}
  \label{def:V-manifold}
  Let $V$ be a homogeneous type. A type $M$ is a $V$-manifold if there is a span
  \begin{center}
    \begin{tikzcd}
      & U\arrow[dr, twoheadrightarrow, "\text{ét}"]\arrow[dl, "\text{ét}", swap] & \\
      V & & M
    \end{tikzcd}
  \end{center}
  where the left map is formally étale and the right map is formally étale and surjective.
\end{definition}

There is one trivial example:
\begin{example}
  Let $V$ be a homogeneous type, then $V$ is a $V$-manifold witnessed by the span:
  \begin{center}
    \begin{tikzcd}
      & V\arrow[dr, "\id"]\arrow[dl, "\id", swap] & \\
      V & & V
    \end{tikzcd}
  \end{center}
\end{example}

Less obvious are the following two ways of producing new $V$-manifolds.
However, without adding anything to our type theory making the modality $\Im$ more specific,
we cannot hope for examples that are not given as homogeneous types.
What could be added will be discussed at the beginning of the next section.

The statement in \ref{formal-disk-bundle-as-manifold} is a variant of the classical fact that the tangent bundle of a manifold is a manifold, 
but in our case, the infinitesimal information is kept separate.

\begin{lemma}
  \label{generate-manifolds}
  Let $V$ be homogeneous and $M$ be a $V$-manifold.
  \begin{enumerate}
  \item\label{formal-disk-bundle-as-manifold} The formal disk bundle $T_\infty M$ of $M$ is a $(V\times \DD_e)$-manifold.
  \item\label{etale-preserves-manifold} For any formally étale map $\varphi: N\to M$, $N$ is a $V$-manifold.
  \item\label{transitivity-of-being-V-manifold} If $V'$ is a homogeneous $V$-manifold and $N$ a $V'$-manifold, then $N$ is also a $V$-manifold.\footnote{This was a question by Ulrik Buchholtz.}
  \end{enumerate}
\end{lemma}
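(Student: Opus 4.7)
The plan is to build a witnessing span in each case using three ingredients: the pullback characterization of the formal disk bundle from Lemma \ref{lemma-etale-pullback-square}(b), pullback stability of formal étaleness (Theorem \ref{etale-is-pullback-stable}) and of surjections (Lemma \ref{surjective-pullback-stable}), and closure of formal étaleness under composition (Lemma \ref{lemma-basic-etale-properties}(a)).

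For (a), fix a span exhibiting $M$ as a $V$-manifold: a formally étale map $f:U\to V$ and a formally étale surjection $g:U\to M$. I take $T_\infty U$ as the apex of the new span. Applying Lemma \ref{lemma-etale-pullback-square}(b) to $f$ and $g$ yields pullback squares whose bottom rows are $f$ and $g$ and whose top rows are the induced maps $T_\infty U\to T_\infty V$ and $T_\infty U\to T_\infty M$. Because in each case the top row is the pullback of the bottom one, Theorem \ref{etale-is-pullback-stable} makes $T_\infty U\to T_\infty V$ formally étale, and makes $T_\infty U\to T_\infty M$ formally étale and (by Lemma \ref{surjective-pullback-stable}) surjective. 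Composing $T_\infty U\to T_\infty V$ with the trivialization $T_\infty V\simeq V\times\DD_e$ from Theorem \ref{triviality-theorem} --- an equivalence, hence formally étale by Lemma \ref{lemma-basic-etale-properties}(b) --- produces a formally étale map $T_\infty U\to V\times\DD_e$, and together with $T_\infty U\to T_\infty M$ this is the required span.

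For (b), with witnessing span $V\leftarrow U\to M$ for $M$, pull the formally étale surjection $U\to M$ back along $\varphi:N\to M$ to obtain $U':= U\times_M N\to N$, which is again formally étale and surjective by pullback stability. The other projection $U'\to U$ is a pullback of the formally étale map $\varphi$, hence itself formally étale, so composing with $U\to V$ gives a formally étale map $U'\to V$ by Lemma \ref{lemma-basic-etale-properties}(a). The resulting span witnesses $N$ as a $V$-manifold.

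For (c), given spans $V\leftarrow U_1\to V'$ and $V'\leftarrow U_2\to N$ witnessing that $V'$ is a $V$-manifold and $N$ is a $V'$-manifold, set $W:=U_1\times_{V'}U_2$. The projection $W\to U_2$ is a pullback of the formally étale surjection $U_1\to V'$, hence formally étale and surjective, so composing with $U_2\to N$ gives a formally étale surjection $W\to N$. Dually $W\to U_1$ is a pullback of the formally étale $U_2\to V'$, hence formally étale, so $W\to U_1\to V$ is formally étale. The main pitfall throughout is orientation: Theorem \ref{etale-is-pullback-stable} transfers formal étaleness to the \emph{parallel} side of a pullback square, so one must check in each square that the side one actually needs is the parallel side of a known formally étale map; once the spans are laid out as above this bookkeeping is routine, and no use of the homogeneous structure is needed beyond the appeal to Theorem \ref{triviality-theorem} in part (a).
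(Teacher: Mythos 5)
Your proof takes essentially the same route as the paper's in all three parts: pulling back the witnessing span using Lemma \ref{lemma-etale-pullback-square}, Theorem \ref{etale-is-pullback-stable} and Lemma \ref{surjective-pullback-stable}, and closing under composition. The one point you omit --- and in fact explicitly disclaim --- is that for the conclusion of (a) to make sense, $V\times\DD_e$ must itself be a \emph{homogeneous} type, since the definition of a $W$-manifold presupposes $W$ homogeneous; the paper supplies this by citing Theorem \ref{homogeneous-sequence} (the formal disk $\DD_e$ at the unit of a homogeneous type is homogeneous) and equipping the product with the componentwise structure. Adding that one sentence would make your argument complete.
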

\begin{proof}
  \begin{enumerate}
  \item We can pull back the span witnessing that $M$ is a $V$-manifold along the projection $T_\infty M\to M$:
    \begin{center}
      \begin{tikzcd}
        V\times\DD_e\arrow[d]\drawpb{dr} & T_\infty U\arrow[d]\arrow[r, twoheadrightarrow, "\text{ét}"]\arrow[l, "\text{ét}", swap] & T_\infty M\arrow[d]\drawpb{dl} \\
        V & U\arrow[r, twoheadrightarrow, "\text{ét}", swap]\arrow[l, "\text{ét}"] & M
      \end{tikzcd}
    \end{center}
    By \ref{lemma-etale-pullback-square} (b) we know that the pullback of the map $T_\infty U\to U$ is the projection from the formal disk bundle of $U$.
    Formally étale maps are preserved by pullbacks by \ref{etale-is-pullback-stable} and surjective maps by \ref{surjective-pullback-stable},
    so the induced map $T_\infty U\to T_\infty M$ is formally étale surjective again.

    By \ref{triviality-theorem-trivialization} we know that $T_\infty V=V\times \DD_e$.
    So, again by \ref{lemma-etale-pullback-square} (b), we have the left pullback square.

    In \ref{homogeneous-sequence} we showed that $\DD_e$ is homogeneous,
    so $V\times\DD_e$ is homogeneous by giving it a componentwise structure.
  \item Pullback along $\varphi$ and composition give us the following:
    \begin{center}
      \begin{tikzcd}
          & \varphi^\ast U\arrow[d, swap, "\text{ét}"]\arrow[r, twoheadrightarrow, "\text{ét}"]\arrow[dl, "\text{ét}", swap, bend right=20] & N\arrow[d, "\varphi"]\drawpb{dl} \\
        V & U\arrow[r, twoheadrightarrow, "\text{ét}", swap]\arrow[l, "\text{ét}"] & M
      \end{tikzcd}
    \end{center}
  \item That $N$ is a $V$-manifold is witnessed by the following diagram using preservation of surjections and formally étale maps under pullbacks:
    \begin{center}
      \begin{tikzcd}
        &     & U_V\times_{V'}U_N\arrow[dr, twoheadrightarrow, "\text{ét}"]\arrow[dl, "\text{ét}", swap]\drawpb{dd} &     &   \\
        & U_V\arrow[dr, twoheadrightarrow, "\text{ét}"]\arrow[dl, "\text{ét}", swap] &                   & U_N\arrow[dr, twoheadrightarrow, "\text{ét}"]\arrow[dl, "\text{ét}", swap] &   \\
      V &     &      V'           &     & N
    \end{tikzcd}
  \end{center}
  \end{enumerate}
\end{proof}

One important special case of part \ref{etale-preserves-manifold} of the lemma is that any formal disk $\DD_x$ of $M$ is a $V$-manifold.

In the following, let $V$ be homogeneous and $M$ be a fixed $V$-manifold.
The definition of $V$-manifolds entails a stronger local triviality condition on the formal disk bundle of $M$ than was discussed in the last section about $F$-fiber bundles, 
since there has to be a formally étale trivialising cover
\footnote{``Trivializing cover'' was defined in \Cref{def-fiber-bundle} and \Cref{def-fiber-bundle-dependent}}.
This property of the trivialising cover will \emph{not} be used in the following lemma.

\begin{lemma}
  \label{classifying-lemma-for-manifolds}
  \begin{enumerate}    
  \item \label{lemma-trivializing-square-for-TM}
    The formal disk bundle of the covering $U$ is trivial and there is a pullback square:
    \begin{center}
      \begin{tikzcd}
        U\times \DD_e\arrow[d]\arrow[r] & T_\infty M\arrow[d]\drawpb{dl} \\
        U\arrow[r, twoheadrightarrow] & M
      \end{tikzcd}
    \end{center}
  \item \label{lemma-classifying-square-for-TM} 
    The formal disk bundle of $M$ has a \notion{classifying morphism} $\tau\inT M\to \BAut(\DD_e)$, i.e. there is a pullback square:
    \begin{center}
      \begin{tikzcd}
        T_\infty M\arrow[r]\arrow[d] & \DD_e \hquot\Aut(\DD_e)\arrow[d, "\pi"]\drawpb{dl} \\
        M\arrow[r, "\tau_M", swap] & \BAut(\DD_e)
      \end{tikzcd}
    \end{center}
  \end{enumerate}
\end{lemma}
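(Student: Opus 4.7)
My plan is to prove (a) by pasting two pullback squares coming from the defining span of the $V$-manifold structure on $M$, and then to derive (b) from (a) via the equivalence between the various definitions of fiber bundles given in the previous section.

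For part (a), recall we have a span $V \xleftarrow{\text{ét}} U \xtwoheadrightarrow{\text{ét}} M$ witnessing that $M$ is a $V$-manifold. By Lemma~\ref{lemma-etale-pullback-square}(b) applied to each leg, both naturality-style squares
\begin{center}
  \begin{tikzcd}
    T_\infty U \arrow[r]\arrow[d] & T_\infty V \arrow[d] \drawpb{dl} & & T_\infty U \arrow[r]\arrow[d] & T_\infty M \arrow[d] \drawpb{dl} \\
    U \arrow[r, "\text{ét}", swap] & V & & U \arrow[r, twoheadrightarrow, "\text{ét}", swap] & M
  \end{tikzcd}
\end{center}
are pullbacks. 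Since $V$ is homogeneous, the triviality theorem~\ref{triviality-theorem} gives an equivalence $T_\infty V \simeq V \times \DD_e$ over $V$. Pulling this trivial bundle back along $U \to V$ yields an equivalence $T_\infty U \simeq U \times \DD_e$ over $U$ (products are stable under pullback along the base). Composing this equivalence with the right-hand pullback square above produces exactly the desired square
\begin{center}
  \begin{tikzcd}
    U \times \DD_e \arrow[r]\arrow[d, "\pi_1", swap] & T_\infty M \arrow[d] \drawpb{dl} \\
    U \arrow[r, twoheadrightarrow] & M,
  \end{tikzcd}
\end{center}
which is a pullback by pullback pasting.

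For part (b), I would read part (a) as exhibiting $T_\infty M \to M$ as a $\DD_e$-fiber bundle in the sense of Definition~\ref{def-fiber-bundle}: the map $U \twoheadrightarrow M$ is surjective (it is part of the $V$-manifold data) and part (a) provides the required trivializing pullback square. Passing to fibers via Lemma~\ref{rem-pullback-fiberwise-equiv}, the dependent type $(x:M)\mapsto (T_\infty M)_x$ satisfies the hypothesis of Theorem~\ref{thm-equivalence-of-fiber-bundle-definitions}, so it is a $\DD_e$-fiber bundle in the dependent sense of Definition~\ref{all-fibers-merely-equal}. The discussion following Definition~\ref{all-fibers-merely-equal} then produces the classifying map
\[ \tau_M : M \to \BAut(\DD_e) \]
together with the pullback square displayed in~\ref{lemma-classifying-square-for-TM} by pasting with the universal square of $\pi: \DD_e//\Aut(\DD_e) \to \BAut(\DD_e)$.

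The main obstacle I anticipate is bookkeeping rather than conceptual: one must be careful to keep track of which leg of the span the pullback is being taken along, and to verify that the fiber in part (a) really is the formal disk $\DD_e$ at the chosen unit $e : V$ (and not merely some unspecified formal disk of $V$). This identification is exactly where the triviality theorem is essential, because without homogeneity of $V$ we would only get local triviality of $T_\infty V$ with potentially varying fibers. Once that identification is pinned down, both claims reduce to formal pullback pasting and the general machinery of section~\ref{section-fiber-bundles}.
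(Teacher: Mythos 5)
Your proposal is correct and follows essentially the same route as the paper: part (a) is obtained by applying Lemma~\ref{lemma-etale-pullback-square} to both legs of the defining span, using Theorem~\ref{triviality-theorem} to trivialize $T_\infty V$ and hence $T_\infty U$ by pullback, and pasting; part (b) is then just the translation of the trivializing-cover definition of fiber bundle into the classifying-map definition via the equivalences of Section~\ref{section-fiber-bundles}. No gaps.
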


\begin{proof}
  \begin{enumerate}
  \item By \ref{lemma-etale-pullback-square}, there is a pullback square for the formally étale map to $V$:
    \begin{center}
      \begin{tikzcd}
        T_\infty U\arrow[r]\arrow[d] & T_\infty V\arrow[d]\drawpb{dl} \\
        U\arrow[r] & V
      \end{tikzcd}
    \end{center}
    Since $V$ is homogeneous, by \ref{triviality-theorem} its formal disk bundle is trivial. This is preserved by pullback, so $T_\infty U$ is trivial.
    The pullback square in the proposition is again given by \ref{lemma-etale-pullback-square}.
  \item The statement \ref{lemma-trivializing-square-for-TM} tells us, that $T_\infty M$ is a $\DD_e$-fiber bundle by Definition \ref{def-fiber-bundle}.
    And \ref{lemma-classifying-square-for-TM} is just another way to state that fact, namely Definition \ref{definition-classifying-square-for-fiber-bundle}.
  \end{enumerate}
\end{proof}

The classifying morphism $\tau_M$ is compatible with formally étale maps in the sense of the following remark.

\begin{remark}
  \label{differential-as-2-cell}
  Let $\varphi:N\to M$ be formally étale, then $N$ is also a $V$-manifold by \ref{generate-manifolds}.
  There is a 2-cell given by the differential of $\varphi$:
  \begin{center}
    \begin{tikzcd}
      |[alias=M]|M\arrow[r, "\tau_M"] & \BAut(\DD_e) \\
      N\arrow[u, "\varphi"]\arrow[ru, "\tau_N"{name=T}, bend right=35, swap]
      \arrow[Rightarrow, from=T, to=M, "d\varphi", swap, shorten <= 1em, shorten >= 1em]
    \end{tikzcd}
  \end{center}
\end{remark}
\begin{proof}
  We proved in \Cref{lemma-etale-pullback-square} (a) that the differential of a formally étale map
  is an equivalence at all points. Applied to $\varphi$, this fact may be expressed in the following way:
  \[ d\varphi:\prod_{x:N}\DD_x\simeq\DD_{\varphi(x)} \]
  This yields a 2-cell of the desired type, since the formal disks $\DD_x$ and $\DD_{\varphi(x)}$ are merely equivalent to $\DD_e$ for all $x$.
\end{proof}

This will be useful when we work with $G$-jet-structures in the next section.

\subsection[G-jet-structures]{$G$-jet-structures}
\label{G-jet-structures}

Intuitively, the classifying morphism $\tau_M:M\to\BAut(\DD_e)$ of a $V$-manifold $M$ describes how the formal disk bundle is glued together using automorphisms of $\DD_e$.
Lifts of $\tau_M$ along the delooping $\mathrm{B}G\to\BAut(\DD_e)$ of a morphism from a group $G$ will be called $G$-jet-structures. 

Some simple, classical $G$-structures on $\RR^n$-manifolds (or $n$-manifolds) only consider automorphisms of the tangent space,
so their delooped automorphism group $\mathrm{B}\GL_n(\RR)$
\footnote{Note that $\GL_n(\RR)$ and other groups appearing in the table below are \emph{set-level} structures in the intended applications,
so there is no problem with defining them.}
takes the role of $\BAut(\DD_e)$ in our $G$-jet-structures.
For an $\mathbb{R}^n$-manifold, the type $\BAut(\DD_e)$ will be a delooping of the infinite jet group $\mathrm{J}^\infty_n(\RR)$.
It is known (see for example \cite[p. 131]{NaturalDiffGeo}),
that the kernel of the projection $\mathrm{J}^\infty_n(\RR)\to \GL_n(\RR)$ is contractible.
The projection also has a section given by extending linear automorphisms to the formal disk.
This situation is nice enough, that we expect no problems with lifting our general classifying map $\tau_M:M\to\BAut(\DD_e)$,
to a classical classifying map $M\to \mathrm{B}\GL_n(\RR)$ in the case of $\RR^n$-manifolds --
which would admit reusing the classical examples.

There are lots of interesting classical examples of structures on manifolds that can be encoded as $G$-structures.
We give a list of examples, what group morphisms -- which are almost always inclusions of subgroups --
encode structures on a smooth $n$-manifold as $G$-structures. 
Some of the examples assume $n=2d$.
\begin{center}
  \label{G-structure-table}
  \begin{tabular}{ll}
    \toprule
    $\mathrm{G}\to \GL(n)$ & $G$-structure  \\
    \midrule
    $\mathrm{O}(n)\to\GL(n)$ & Riemannian metric \\
    $\GL^{+}(n)\to \GL(n)$  & orientation \\
    $\mathrm{O}(n-1,1)\to\GL(n) $ & pseudo-Riemannian metric \\ 
    $\mathrm{SO}(n,2)\to\GL(n)$ & conformal structure \\
    $\GL(d,\CC)\to \GL(2d,\RR)$ & almost complex structure \\
    $\mathrm{U}(d)\to \GL(2d,\RR)$ & almost Hermitian structure \\
    $\mathrm{Sp}(d)\to \GL(2d,\RR)$ & almost symplectic structure \\
    $\mathrm{Spin}(n)\to\GL(n)$ & spin structure \\
    \bottomrule
  \end{tabular}
\end{center}
For a definition of $\mathrm{O}(n)$- and $\GL(d,\CC)$-structures, see \cite{Chern}.
Note that in all of the above examples, $G$ is a 1-group\footnote{We call a 0-type with a group structure a 1-group.},
yet our theory also supports higher groups. 
The string 2-group and the fivebrane 6-group are examples of higher $G$-structures of interest in physics. 
See \cite{SatiSchreiberStasheffFivebrane} for details and references.
In the classical theory torsion-free $G$-structures are to $G$-structures what symplectic structures are to almost symplectic structures.
We will give a candidate analog of torsion-freeness for $G$-jet-structures at the end of this section.

We will now turn to the formal treatment of $G$-jet-structures on $V$-manifolds and the construction of the moduli spaces of these structures.
From now on, let $V$ be a homogeneous type.
As we learned in the last section in \ref{classifying-lemma-for-manifolds}, the formal disk bundle of a $V$-manifold $M$ is always
classified by a morphism $\tau_M:M\to\BAut(\DD_e)$, where $\DD_e$ is the formal disk at the unit $e:V$.
Since this is the only feature of a $V$-manifold that we need for the constructions in this section,
we will work with the following more general class of spaces,
where $D$ is an arbitrary type which takes the role of $\DD_e$.

\begin{definition}
  A type $M$ is called \notion{formal $D$-space}\footnote{The name was invented by Urs Schreiber and the author for the present purpose.}
 if its formal disk bundle is a $D$-fiber bundle.
\end{definition}

\begin{remark}
  \begin{enumerate}
  \item Any $V$-manifold $M$ is a formal $\DD_e$-space.
  \item Being a formal $D$-space is a proposition.
  \end{enumerate}
\end{remark}
\begin{proof}
  \begin{enumerate}
  \item This is \ref{classifying-lemma-for-manifolds}.
  \item One of the equivalent definitions of $D$-fiber bundle, \ref{all-fibers-merely-equal}, was directly a proposition:
    \[ (\text{$P:A\to\mathcal U$ is a $D$-fiber bundle}):\equiv\prod_{x:A}\|P(x)\simeq D\|\]
  \end{enumerate}
\end{proof}
We are interested in the case $D\equiv\DD_e$ for $e:V$ meaning that $M$ is a formal $\DD_e$-space if $\prod_{x:M}\|\DD_x\simeq\DD_e\|$.
In \ref{generate-manifolds} we saw, that we can ``pullback'' the structure of a $V$-manifold along a formally étale map.
Formal $\DD_e$-spaces behave the same way by virtue of the 2-cell we already saw in \ref{differential-as-2-cell}.
\begin{lemma}
  Let $M$ be a formal $\DD_e$-space.
  For any formally étale $\varphi:N\to M$, $N$ is also a formal $\DD_e$-space and there is the triangle:
  \begin{center}
    \begin{tikzcd}
      |[alias=M]|M\arrow[r, "\tau_M"] & \BAut(\DD_e) \\
      N\arrow[u, "\varphi"]\arrow[ru, "\tau_N"{name=T}, bend right=35, swap]
      \arrow[Rightarrow, from=T, to=M, "d\varphi", swap, shorten <= 1em, shorten >= 1em]
    \end{tikzcd}
  \end{center}
\end{lemma}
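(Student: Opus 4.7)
The plan has two steps corresponding to the two claims. For the first claim, that $N$ is a microformal $\DD_e$-space, I unfold the definition and need to show $\prod_{x:N}\|\DD_x\simeq\DD_e\|_{-1}$. Fix $x:N$. Since $\varphi$ is formally étale, Lemma \ref{lemma-etale-pullback-square}(a) provides an equivalence $d\varphi_x:\DD_x\simeq\DD_{\varphi(x)}$. Because $M$ is a microformal $\DD_e$-space, we already have $\|\DD_{\varphi(x)}\simeq\DD_e\|_{-1}$; composing with $d\varphi_x$ inside the propositional truncation yields $\|\DD_x\simeq\DD_e\|_{-1}$, as required by definition \ref{all-fibers-merely-equal}.

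For the second claim, the classifying map $\tau_N:N\to\BAut(\DD_e)$ exists by the equivalence between definitions \ref{all-fibers-merely-equal} and \ref{definition-classifying-square-for-fiber-bundle}. Concretely, I would take $\tau_N(x)$ to be the pair with first component $\DD_x$, so that $(\tau_M\circ\varphi)(x)$ is the pair with first component $\DD_{\varphi(x)}$. The equivalence $d\varphi_x$ of underlying types lifts to an equality in $\BAut(\DD_e)$ via the equivalence
\[ (X=_{\BAut(\DD_e)}Y)\simeq(\iota_{\BAut(\DD_e)}(X)\simeq\iota_{\BAut(\DD_e)}(Y)), \]
noted in the examples following \ref{definition-canonical-cover} and arising from injectivity of $\iota_{\BAut(\DD_e)}$ combined with univalence. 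Applying function extensionality over $x:N$ packages these pointwise equalities into the desired 2-cell $d\varphi:\tau_N\Rightarrow\tau_M\circ\varphi$.

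I do not expect a real obstacle, as the argument essentially recapitulates \ref{differential-as-2-cell} with ``$V$-manifold'' replaced by ``microformal $\DD_e$-space''; the proof of that remark already used only the fibered-equivalence property of $d\varphi$. The one subtlety is ensuring the choice of $\tau_N$ is canonical enough that the 2-cell built from $d\varphi$ genuinely relates $\tau_N$ to $\tau_M\circ\varphi$, but since $\tau_N$ is determined up to canonical equivalence by the formal disk bundle of $N$, this is automatic.
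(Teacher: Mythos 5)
Your proposal is correct and follows essentially the same route as the paper: the paper likewise observes that the triangle already exists over $\mathcal U$ via $x\mapsto\DD_x$ and the fibered equivalence $d\varphi$, obtains $\tau_N$ by truncating $d\varphi$ and composing with $\tau_M$, and then descends the 2-cell along the injection $\BAut(\DD_e)\mono\mathcal U$ — which is exactly the identity-type equivalence you invoke explicitly. (Minor remark: a 2-cell is already a pointwise family of equalities, so the appeal to function extensionality at the end is unnecessary, but harmless.)
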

\begin{proof}
  First, the triangle in the statement exists for a formally étale map between any types, if $\BAut(\DD_e)$ is replaced with the universe:
  \begin{center}
    \begin{tikzcd}
      |[alias=M]|M\arrow[r, "x\mapsto\DD_x"] & \mathcal U \\
      N\arrow[u, "\varphi"]\arrow[ru, "x\mapsto\DD_x"{name=T}, bend right=35, swap]
      \arrow[Rightarrow, from=T, to=M, "d\varphi", swap, shorten <= 1em, shorten >= 1em]
    \end{tikzcd}
  \end{center}
  By assumption we know, that $(x:M)\mapsto \DD_x$ lands in $\BAut(\DD_e)$.
  But $\varphi$ is formally étale, so we have $d\varphi:\prod_{x:N}\DD_x\simeq\DD_{f(x)}$.
  The latter may be truncated and composed with $\tau_M:\prod_{x:M}\|\DD_x\simeq\DD_e\|$ to get $\tau_N:\prod_{x:N}\|\DD_x\simeq\DD_e\|$.
  So both maps to $\mathcal U$ factor over $\BAut(\DD_e)$.
\end{proof}
From now on, we assume that $\BG$ is a connected, pointed type and $(\ast=_{\BG}\ast)\simeq\mathrm{G}$.
We will define $G$-jet-structures\footnote{See the introduction to this section \ref{G-jet-structures}, for an explanation of the appearance of ``jets'' at this point.} or \notion{reductions of the structure group}, a synonym hinting that in a lot of cases, $G$ is a subgroup of $\Aut(\DD_e)$.
We will not restrict ourselves to reductions to subgroups and look at general pointed maps $\BG\to\BAut(\DD_e)$,
which correspond to general group homomorphisms $\mathrm{G}\to\Aut(\DD_e)$.
\begin{definition}
  \label{def:G-jet-structure}
  Let $\chi:\BG\to\BAut(\DD_e)$ be a pointed map and $M$ be a formal $\DD_e$-space.
  A \notion{$G$-jet-structure} on $M$ is a map $\varphi:M\to\BG$ together with a 2-cell $\eta:\chi\circ\varphi\Rightarrow\tau_M$:
  \begin{center}
    \begin{tikzcd}
      & |[alias=BG]|\BG\arrow[d, "\chi"] \\
      M\arrow[ur, "\varphi", bend left=30]\arrow[r, "\tau_M"{name=tau}, swap] & \BAut(\DD_e)
      \arrow[Rightarrow, from=BG, to=tau, shorten <= 0.5em, shorten >= 1em, "\eta", swap]
    \end{tikzcd}
  \end{center}
  We write
  \[ \Gstr(M):\equiv\sum_{\varphi:M\to\BG}(\chi\circ\varphi\Rightarrow\tau_M) \]
  for the type of $G$-jet-structures on $M$.
\end{definition}
The special case $G=1$ turns out to be interesting -- a $1$-jet-structure on a formal $\DD_e$-space is nothing else than a trivialization of the formal disk bundle,
like we produced in \ref{triviality-theorem-trivialization} for any homogeneous type.
This provides us with an example of a $1$-jet-structure, whose construction is, in spite of the name we will give below, not entirely trivial.
\begin{definition}
  The \notion{trivial $1$-jet-structure} on $V$ is the trivialization $\psi:\prod_{x:V}\DD_e\simeq\DD_x$ constructed in \ref{triviality-theorem-trivialization}:
  \begin{center}
    \begin{tikzcd}
      & |[alias=B1]|\mathrm{B}1:\equiv 1\arrow[d, "\ast\mapsto\DD_e"] \\
      V\arrow[ur, "\_\mapsto\ast", bend left=30]\arrow[r, "\tau_V"{name=tau}, swap] & \BAut(\DD_e)
      \arrow[Rightarrow, from=B1, to=tau, shorten <= 1em, shorten >= 1em, "\psi", swap]
    \end{tikzcd}
  \end{center}
\end{definition}
Since we have pointed maps, there is a triangle for any $\chi:\BG\to\BAut(\DD_e)$:
\begin{center}
  \begin{tikzcd}
    \mathrm{B}1\arrow[r, "\ast\mapsto\ast"]\arrow[d, swap, "\ast\mapsto\DD_e"{name=m}] & |[alias=BG]|\BG\arrow[dl, bend left=20, "\chi"] \\
    \BAut(\DD_e)\arrow[Rightarrow, from=m, to=BG, shorten <= 1.5em, shorten >= 1.5em]
  \end{tikzcd}
\end{center}
So we can define a trivial structure in the same way as above for arbitrary $G$.
Let us fix a pointed map $\chi:\BG\to\BAut(\DD_e)$ from now on.
\begin{definition}
  \label{definition-of-trivial-G-structure}
  Let $\mathcal T:\DD_e\simeq \chi(\ast)$ be the transport along the equality witnessing that $\chi$ is pointed.
  The \notion{trivial $G$-jet-structure} on $V$ is given by $\psi'_x:\equiv \psi_x\circ\mathcal T$:
  \begin{center}
    \begin{tikzcd}
      & |[alias=BG]|\BG\arrow[d, "\chi"] \\
      V\arrow[ur, "\_\mapsto\ast", bend left=30]\arrow[r, "\tau_V"{name=tau}, swap] & \BAut(\DD_e)
      \arrow[Rightarrow, from=BG, to=tau, shorten <= 1em, shorten >= 1em, "\psi'", swap]
    \end{tikzcd}
  \end{center}
\end{definition}
An important notion that we will introduce in the end of this section, is a \emph{torsion-free} $G$-structure.
In some sense to be made precise, these $G$-jet-structures will be trivial on all formal disks.
Before we can do this, we need to be able to restrict $G$-jet-structures to formal disks, or more generally, to pull them back along formally étale maps.
\begin{definition}
  \begin{enumerate}
  \item For $M$ a formal $\DD_e$-space and $f:N\to M$ a formally étale map from some type $N$,
    there is a map $f^\ast:\Gstr(M)\to\Gstr(N)$.
  \item For the special case of formal disk inclusions $u_x:\DD_x\to M$ and $\Theta:\Gstr(M)$,
    we call $ u_x^\ast\Theta$ 
    the \notion{restriction} of $\Theta$ to the formal disk at $x$.
  \end{enumerate}
\end{definition}
\begin{construction}[of $f^\ast$]
  \label{construction-of-pullback-structure}
  Let $\Theta\equiv(\varphi,\eta):\Gstr(M)$.
  Then we can paste the triangle constructed in \ref{differential-as-2-cell}
  to the triangle given by $(\varphi,\eta)$:
  \begin{center}
    \begin{tikzcd}
      & |[alias=BG]|\BG\arrow[d, "\chi"] \\
      |[alias=M]|M\arrow[r, "\tau_M"{name=tau}]\arrow[ru, "\varphi", bend left=20] & \BAut(\DD_e) \\
      \arrow[Rightarrow, from=BG, to=tau, shorten <= 1em, shorten >= 0.6em, "\eta", swap]
      N\arrow[u, "f"]\arrow[ru, "\tau_N"{name=T}, bend right=35, swap]
      \arrow[Rightarrow, from=T, to=M, "df", swap, shorten <= 1em, shorten >= 1em]
    \end{tikzcd}
  \end{center}
  We define the result of the pasting to be $f^\ast(\varphi,\eta):\Gstr(N)$.
  Or, put differently:
  \[ f^\ast(\varphi,\eta):\equiv(\varphi\circ f,(y:N)\mapsto\eta_{f(y)}\bu df_y^{-1}). \]
\end{construction}
Pulling back $G$-jet-structures is 1-functorial in the following sense.
\begin{remark}
  \label{pullback-G-jet-structures-1-functorial}
  Let $f:N\to M$, $g:L\to N$ be formally étale and $M$ a formal $\DD_e$-space then there is a triangle
  \begin{center}
    \begin{tikzcd}
      \Gstr(M)\arrow[rr, "(f\circ g)^\ast"{name=m}]\arrow[dr, "f^\ast", swap] & & \Gstr(L) \\
      & |[alias=N]|\Gstr(N)\arrow[ur, "g^\ast", swap] &
      \arrow[Rightarrow, from=m, to=N, shorten <= 0.7em, shorten >= 0.7em]
    \end{tikzcd}
  \end{center}
\end{remark}
\begin{proof}
  By \ref{chain-rule} we have 
  \[ d(f\circ g)_x=(df)_{g(x)}\circ dg_x.\]
  In diagrams, this yields a 3-cell between the pasting of
  \begin{center}
    \begin{tikzcd}
      |[alias=M]|M\arrow[dr, "\tau_M", bend left=30] & \\
      |[alias=N]|N\arrow[u, "f"]\arrow[r, "\tau_N"{name=TN}] & \BAut(\DD_e) \\
      L\arrow[u, "g"]\arrow[ru, "\tau_L"{name=TL}, bend right=35, swap]
      \arrow[Rightarrow, from=TL, to=N, "dg", swap, shorten <= 1em, shorten >= 1em]
      \arrow[Rightarrow, from=TN, to=M, "df", swap, shorten <= 0.5em, shorten >= 0.5em]
    \end{tikzcd}
  \end{center}
  and 
  \begin{center}
    \begin{tikzcd}
      |[alias=M]|M\arrow[dr, "\tau_M", bend left=25] & \\
      & \BAut(\DD_e) \\
      L\arrow[uu, "f\circ g"]\arrow[ur, "\tau_L"{name=TL}, swap, bend right=24] & 
      \arrow[Rightarrow, from=TL, to=M, "d(f\circ g)", swap, shorten <= 1.7em, shorten >= 1.7em, near end]
    \end{tikzcd}
  \end{center}
  This means the 2-cells we paste when applying $(f\circ g)^\ast$ or $g^\ast\circ f^\ast$ are equal,
  so the functions must be equal, too.
\end{proof}
Let $M$ be a fixed formal $\DD_e$-space from now on.
The final definition of this article is that of a torsion-free $G$-jet-structure.
The aim is to ask if a $G$-jet-structure ``looks like the trivial $G$-jet-structure everywhere on an infinitesimal scale''.
To do this we restrict a $G$-jet-structure to the formal disk at a point and compare it to the trivial $G$-jet-structure on $\DD_e$.
So let us fix a notation for this structure:
\begin{definition}
  Let $\xi:\Gstr(V)$ be the trivial $G$-jet-structure from \ref{definition-of-trivial-G-structure} and $ u_e:\DD_e\to V$ the formal disk inclusion.
  Then
  \[ \xi_e:\equiv  u_e^* \xi \]
  is the \notion{trivial $G$-jet-structure} on $\DD_e$.
\end{definition}

But a priori, we have no means of comparing $G$-jet-structures on formal disks with this trivial structure,
so we need formally étale maps from all formal disks to $\DD_e$.
For formal $\DD_e$-spaces we merely have an equivalence from any formal disk to $\DD_e$.
More precisely, by \ref{all-fibers-merely-equal} we have
\[ \tau_M:\prod_{x:M}\|\DD_x\simeq \DD_e\|. \]
And by pulling back to the canonical cover $w:W\to M$ from \ref{definition-canonical-cover} we get
\[ \omega_M:\prod_{x:W}\DD_{w(x)}\simeq \DD_e\]
This is enough to make the indicated comparison.
\begin{definition}
  \label{def-torsion-free}
  A $G$-jet-structure $\Theta$ on $M$ is \notion{torsion-free},
  if
  \[ \prod_{x:W} \| (\omega_{M,w(x)}^{-1})^*u_{w(x)}^*\Theta = \xi_e\|\equiv:\text{torsion-free}(\Theta) \]
\end{definition}

It turns out that even for the trivial 1-jet-structure on $V$, torsion-freeness is non-trivial.
If the trivial 1-jet-structure is \emph{left-invariant} as defined below, it is an example of a torsion-free 1-jet-structure.
To match classic notions, we assume that the equivalences of the homogeneous structure are left-translations.
\begin{definition}
  The trivial $G$-jet-structure $\xi$ on $V$ is called \notion{left-invariant}, if the following condition holds:
  \begin{align*}
    \prod_{x:V}t_x^\ast\xi=\xi
  \end{align*}
\end{definition}
If our homogeneous space $V$ is a Lie-Group, the trivial 1-jet-structure is constructed the same way as the Maurer-Cartan form,
which satisfies the equation above. Turning this around, we get the following example:\footnote{This example and its presentation are a result of a discussion with Urs Schreiber.
}

\begin{theorem}
  \label{example-left-invariance-group}
  Let $V$ be a 1-group and its homogeneous structure be given by left translations,
  then the trivial $G$-jet-structure given by this homogeneous structure is left-invariant. 
\end{theorem}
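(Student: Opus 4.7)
The plan is to verify the two defining conditions of left-invariance by directly unfolding the trivial $1$-structure $\xi$. Its underlying map $V \to \mathrm B 1 \equiv 1$ is constant, and its $2$-cell component at $x$ is the equivalence $\psi_x : \DD_x \simeq \DD_e$ from Theorem \ref{triviality-theorem}(a), namely the inverse of the composite $\DD_e \xrightarrow{d(t_x)_e} \DD_{t_x(e)} \xrightarrow{\mathrm{transport}_{p_x}} \DD_x$. Condition (i) then reads $\psi_e = \id_{\DD_e}$; condition (ii), after unfolding Construction \ref{construction-of-pullback-structure}, reads $\psi_{t_x(y)} \bu d(t_x)_y^{-1} = \psi_y$ for all $x,y : V$, equivalently $\psi_{t_x(y)} \circ d(t_x)_y = \psi_y$ as equivalences $\DD_y \simeq \DD_e$.

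For (i), the group axiom $e \cdot y = y$ gives $t_e = \id_V$ pointwise; since $V$ is a set, function extensionality upgrades this to an equality of functions, whence $d(t_e)_e = \id_{\DD_e}$ (also recoverable from \ref{chain-rule} applied to $\id = \id \circ \id$). The datum $p_e$ lives in the identity type $t_e(e) = e$ of $V$, which is a proposition because $V$ is a set, so $p_e = \refl$ and transport along it is the identity. The composite defining $\psi_e$ is therefore $\id_{\DD_e}$.

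For (ii), associativity yields $t_{xy}(z) = (x \cdot y) \cdot z = x \cdot (y \cdot z) = (t_x \circ t_y)(z)$ for all $z$, so by function extensionality $t_{xy} = t_x \circ t_y$. The chain rule \ref{chain-rule} then gives $d(t_{xy})_e = d(t_x)_{t_y(e)} \circ d(t_y)_e$. Combined with the naturality of transport along $t_x$---that $d(t_x)_y \circ \mathrm{transport}_\alpha = \mathrm{transport}_{t_x(\alpha)} \circ d(t_x)_{t_y(e)}$ for $\alpha : t_y(e) = y$---the expression $\mathrm{transport}_{p_{xy}} \circ d(t_{xy})_e$, which is $\psi_{xy}^{-1}$, rewrites as $d(t_x)_y \circ \mathrm{transport}_{p_y} \circ d(t_y)_e = d(t_x)_y \circ \psi_y^{-1}$. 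The only residual issue is whether the paths $t_x(p_y)$ and $p_{xy}$ in the identity type $t_{xy}(e) = xy$ of $V$ agree; both lie in a propositional identity type of a set, hence are equal. Inverting yields the required equation.

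The main obstacle is the bookkeeping of transports along equalities in $V$ and their interplay with differentials. The decisive simplification is that $V$ is $0$-truncated, which makes every identity type in $V$ a proposition and thereby collapses every coherence between paths in $V$---in particular the associator, the right-unit equality $p_e = \refl$, and the comparison $t_x(p_y) = p_{xy}$---so that only the definitional manipulations provided by the chain rule and transport naturality remain. Conceptually, because $\psi_x$ is assembled canonically from $t_x$ and left translations satisfy $t_{xy} = t_x \circ t_y$, pulling $\psi$ back along a translation merely re-presents $\psi$.
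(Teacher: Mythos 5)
Your proof is correct and takes essentially the same route as the paper's: both conditions are verified by unfolding the trivial $1$-structure and, for the second, combining $t_{xy}=t_x\circ t_y$ with the chain rule \ref{chain-rule} to rewrite $d(t_{xy})_e$ and cancel $d(t_x)_y$. You are merely more explicit than the paper about the transport bookkeeping along $p_x : t_x(e)=x$ and about where $0$-truncatedness of $V$ collapses the path coherences (e.g.\ $p_e=\refl$ and $t_x(p_y)=p_{xy}$), identifications the paper silently suppresses.
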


\begin{proof}
  We will use the following equation given by the group structure:
  \[ t_{t_x(y)}=t_{xy}=t_x\circ t_y\]
  Evaluating at $e$ and using the chain rule \ref{chain-rule} yields:
  \[ d(t_{t_x(y)})_e=\left(dt_x\right)_{t_y(e)}\circ \left(dt_y\right)_e=\left(dt_x\right)_y\circ (dt_y)_e=(dt_y)_e\bu\left(dt_x\right)_y \]
  The latter equality is just moving our equation to $\BAut(\DD_e)$.

  Now for the trivial $G$-jet-structure $\xi\equiv \left(\_\mapsto\ast, y\mapsto\left(dt_y\right)_e\right)$ we can calculate
  \begin{align*}
    t_x^\ast\xi&=\left(\_\mapsto\ast, y\mapsto\left(dt_{t_x(y)}\right)_e\bu \left(dt_x\right)_y^{-1}\right) \\
                  &=\left(\_\mapsto\ast, y\mapsto\left( \left( dt_y \right)_e\bu\left( dt_x\right)_y \right)\bu \left(dt_x\right)_y^{-1}\right) \\
                  &=\left(\_\mapsto\ast, y\mapsto\left(dt_y \right)_e\right) \\
                  &=\xi
  \end{align*}
\end{proof}

\begin{theorem}
  Let $V$ be a homogeneous space such that the trivial $G$-jet-structure is left-invariant, then the trivial $G$-jet-structure on $V$ is torsion-free.
\end{theorem}

\begin{proof}
  Let $t_x$ be the translation to $x:V$ given by the homogeneous structure on $V$ and
  $\xi\equiv(\_\mapsto\ast, x\mapsto\left(dt_x\right)_e)$ the trivial $G$-jet-structure on $V$.
  Then for all $x:V$ we have a square of formally étale maps:
  \begin{center}
    \begin{tikzcd}
      \DD_{x}\arrow[r, " u_{x}"] & V \\
      \DD_e\arrow[r, " u_e", swap]\arrow[u, "dt_x"] & V\arrow[u, "t_x", swap]
    \end{tikzcd}
  \end{center}
  By \ref{pullback-G-jet-structures-1-functorial}, we get the following formula:
  \[  u_e^\ast t_x^\ast\xi=dt_x^\ast  u_{x}^\ast\xi\]
  By \ref{example-left-invariance-group} we can simplify the left hand side:
  \[  u_e^\ast \xi=dt_x^\ast  u_{x}^\ast\xi\]
  The left hand side is the trivial structure on $\DD_e$ and
  we have to identify the right hand side with the term $(\omega_{M,x}^{-1})^\ast  u_{\omega(x)}^\ast\xi$ from \ref{def-torsion-free},
  where $\omega=\id_V$.
  This amounts to an identification $dt_x^\ast  u_{x}^\ast\xi=u^\ast_x\xi$,
  which is given by \ref{triviality-theorem-trivialization}.
\end{proof}
Since torsion-freeness -- as we defined it -- is a proposition,
the type of torsion-free $G$-jet-structures is a subtype of the type of $G$-jet-structures.
The latter should be distinguished from the \notion{moduli space} of $G$-jet-structures on $M$,
which is the quotient of the type of $G$-jet-structures by the action of the automorphism group of $M$.
If $M$ is a 0-type, we could just build this quotient as a higher inductive type, but this is a bit unsatisfactory and not the most pleasant definition to work with.
A more promising approach is to use
that the quotient of an action given as a dependent type $\rho:\BG\to\mathcal U$ is just $\sum_{x:\BG}\rho(x)$.
To make this approach work, the author reformulated a lot of the original theory in \cite{wellen-thesis}.
With the present version, we will see that this construction works without considerable effort.

To realize the construction of the moduli space as a dependent sum,
we need to note, that the definition of $G$-jet-structures is actually a dependent type over $\BAut(M)$.

\begin{lemma}
  There is a dependent type $\Gstr:\BAut(M)\to\mathcal U$ with $\Gstr(M')$ being the $G$-jet-structures on $M'$.
\end{lemma}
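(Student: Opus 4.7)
The plan is to observe that $\BAut(M)$ is a subtype of $\mathcal U$ and to apply the $G$-structure construction pointwise. Unfolding $\BAut(M)\equiv\sum_{X:\mathcal U}\|M=X\|_{-1}$, any $M':\BAut(M)$ is a type together with a witness $p:\|M=M'\|_{-1}$ that $M'$ is merely equal to $M$. The target definition is then
\[ \Gstr(M') \defequal \sum_{\varphi:M'\to\BG}(\iota\circ\varphi\Rightarrow\tau_{M'}), \]
so the real content is producing a classifying map $\tau_{M'}:M'\to\BAut(\DD_e)$ uniformly in $M'$.

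For this, I would first observe that the predicate ``$X$ is a microformal $\DD_e$-space'', namely $\prod_{x:X}\|\DD_x\simeq\DD_e\|_{-1}$, is a proposition, being a dependent product of propositional truncations. Since $M$ is such a space by hypothesis and substitution transports this property along any equality $M=M'$, the truncated witness $p$ suffices: the target is propositional, so I may strip $p$ and transport to conclude that $M'$ itself is a microformal $\DD_e$-space. The map $\tau_{M'}$ is then produced exactly as in \ref{classifying-lemma-for-manifolds}: its underlying function $M'\to\mathcal U$ sends $x\mapsto\DD_x$, and the just-established microformal structure witnesses that it factors through $\BAut(\DD_e)\subseteq\mathcal U$.

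The main subtlety I expect is ensuring this really does yield a single well-defined $\tau_{M'}$, and hence a well-defined fiber of $\Gstr$, in spite of only a mere equality between $M$ and $M'$ being available. This is resolved by the two observations above: the function $x\mapsto\DD_x$ is defined for any type without reference to a chosen equality, and its factorisation through $\BAut(\DD_e)$ is a property rather than additional structure, so nothing is chosen. With $\tau_{M'}$ in hand, the displayed formula produces the required dependent type $\Gstr:\BAut(M)\to\mathcal U$, and at the canonical point of $\BAut(M)$ corresponding to $M$ itself, the fiber reduces to the previously defined type of $G$-structures on $M$.
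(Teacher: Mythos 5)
Your proposal is correct and follows essentially the same route as the paper: observe that being a microformal $\DD_e$-space is a proposition, so the mere equality $\|M=M'\|_{-1}$ suffices to transport that property to $M'$, and the resulting (untruncated) witness supplies the classifying map $\tau_{M'}$ needed to reuse the earlier definition of $\Gstr$. Your additional remark that $x\mapsto\DD_x$ is defined without reference to any chosen equality, so that no choice is being made, is a slightly more explicit version of the same point the paper makes.
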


\begin{proof}
  Since any $M':\BAut(M)$ is equivalent to $M$, it is merely a formal $\DD_e$-space.
  Being a formal $\DD_e$-space is a proposition,
  so $\Gstr(M')$ is defined as desired.
\end{proof}

This means that we can now construct the moduli spaces of $G$-jet-structures and torsion-free $G$-jet-structures in a nice way:
\begin{definition}
  \label{def:space-of-torsion-free-jet-structures}
  Let $M$ be a formal $\DD_e$-space and $\chi:\BG\to\BAut(\DD_e)$ a pointed map.
  \begin{enumerate}
  \item The \notion{moduli space} of $G$-jet-structures on $M$ is given as
    \[ \sum_{M':\BAut(M)}\Gstr(M').\]
  \item The \notion{moduli space of torsion-free $G$-jet-structures} on $M$ is given as
  \[ \sum_{M':\BAut(M)}\sum_{\Theta:\Gstr(M')}\text{torsion-free}(\Theta). \]
  \end{enumerate}
\end{definition}

\subsection*{Conclusion}

While we did not further discuss this, we expect that the homotopy
type theory developed here has interpretation in suitable
$\infty$-toposes equipped with a fibered idempotent $\infty$-monad.
Our abstract construction of moduli spaces of torsion-free $G$-jet-structures
should then have a translation to a corresponding
construction internal to any of these $\infty$-toposes.
When written out in terms of traditional higher category theory, say
as simplicial sheaves, these objects will look rather
complicated and be cumbersome to work with.
Our abstract language should hence serve to make the development of higher Cartan Geometry
in $\infty$-toposes tractable.

In addition to having little restrictions on models,
the abtract way of working with just one monadic modality is also very clear and very suitable for formalization, since no axioms have to be postulated.
Yet the author does not believe that this line of work should be continued on the level of abstraction used in this article.
In the recent, more concrete framework from \cite{sag}, admitting synthetic treatment of algebraic geometry, calculations were crucial in the advances made.
The appraoch to cohomology developed there is likely to have a differential geometric analogue.
It should be fruitful to find extensions of the common axioms of synthetic differential geometry inspired by that research.
In such an extension it might be possible to show, that the fiber bundles defined in \Cref{def-fiber-bundle} are actually fiber bundles in the usual sense of local triviality with respect to a topology.

The idea for the approach to cohomology in \cite{sag} was to use the ``higher-topos'' approach\footnote{The introduction of \cite{Lurie} explains how higher toposes offer a good perspective on cohomology.}
of just mapping into a higher type, usually an Eilenberg-MacLane space, and analysing the 0-truncation of the resulting function type.
This is quite easy to implement in homotopy type theory and was considered early in the history of the subject\footnote{Written down by Michael Shulman in a \href{https://homotopytypetheory.org/2013/07/24/cohomology/}{blog post} which focuses on the connection with the axiom of choice.}.
One important insight from \cite{sag} is that the notion of local triviality, that comes from the topology of a (higher) sheaf topos,
is internally accessible by a choice principle which is somewhat similar to the WISC axiom from \cite{vandenberg-moerdijk-amc}.
So finding the proposed extension of synthetic differential geometry amounts to checking if there are choice principles in differential geometry,
that can be used to make cohomological calculations.
In the synthetic differential geometry, there already is an axiom called the ``covering principle'', which would be a consequence of any reasonable choice axiom for synthetic differential geometry.\footnote{Pointed out by David Jaz Myers in a discussion after an online talk. In his comment, Myers also implicitly suggests a choice principle base on countable covers for the metric topology.  }

The connection to synthetic algebraic geometry does not stop at cohomology --
there is also synthetic algebro geometric work on formally étale maps \cite{diffgeo-draft} which suggests an extension of the Kock-Lawvere axiom for synthetic differential geometry.
If it can be shown that such an extension is supported by models, the theory in the last of a series of articles by Myers \cite{david-fibrations}, \cite{david-groups} and \cite{david-orbifolds}
could be simplified.

The author believes that the best way to continue the work presented in this article
is to find an extension of the usual axioms of synthetic differential geometry
and use all of the recent advances to compute examples of $V$-manifolds, maps between them, cohomology groups, $G$-jet-structures and their moduli spaces.
We expect that when working internally, computations are a lot more feasible
and should be used to establish correspondences to the classical theory
as well as to guide an expansion of the work in this article into a synthetic higher Cartan Geometry.

\printindex

\printbibliography

\end{document}